\newtheorem{theorem}{Theorem}[section]
\newtheorem{lemma}[theorem]{Lemma}
\theoremstyle{definition}
\newtheorem{definition}[theorem]{Definition}
\theoremstyle{remark}
\newtheorem{remark}[theorem]{Remark}
\numberwithin{equation}{section}
\newcommand{\R}{\mathbb{R}}
\begin{document}

\title[Hyperbolicity of equilibria for a quasilinear nonlocal problem]{Stability and hyperbolicity of equilibria for a scalar nonlocal one-dimensional quasilinear parabolic problem}

\author{Alexandre N. Carvalho}
\address{Instituto de Ci\^{e}ncias Ma\-te\-m\'{a}\-ti\-cas e de Computa\c{c}\~{a}o\\ Universidade de S\~{a}o Paulo-Campus de S\~{a}o Carlos \\	Caixa Postal 668, S\~{a}o Carlos SP, Brazil
}
\email{andcarva@icmc.usp.br}
\thanks{The first author was supported by Grants FAPESP 2018/10997-6 and CNPq 306213/2019-2}

\author{Estefani M. Moreira}
\address{Instituto de Ci\^{e}ncias Ma\-te\-m\'{a}\-ti\-cas e de Computa\c{c}\~{a}o\\ Universidade de S\~{a}o Paulo-Campus de S\~{a}o Carlos \\	Caixa Postal 668, S\~{a}o Carlos SP, Brazil}

\curraddr{
}
\email{estefani@usp.br}
\thanks{The second author was supported by Grants FAPESP 2018/00065-9 and CAPES 7547361/D}

\subjclass[2010]{Primary 35B40; Secondary 37G35, 37D10, 47A75}

\keywords{Quasinear problems, nonlocal operators, spectral analysis, hyperbolicity}

\begin{abstract}
In this work, we present results on stability and hyperbolicity of equilibria for a scalar nonlocal one-dimensional quasilinear parabolic problem. We show that this nonlocal version of the well-known Chafee-Infante equation bares some resemblance with the local version. However, its nonlocal characteristc requires a fine analysis of the spectrum of the associated linear operators, a lot more ellaborated than the local case. The saddle point property of equilibria is shown to hold for this quasilinear model.
\end{abstract}

\maketitle

\section{Introduction}

In this work, we consider the asymptotic behavior of solutions of the initial boundary value problem
\begin{equation}\label{eq_non-local}
\left\{\begin{split}
& u_t=a(\Vert u_x\Vert^2)u_{xx}+\lambda f(u),\ x \in (0,\pi), \  t>0,\\
& u(0,t)=u(\pi,0)=0, \ t\geq 0, \\
& u(\cdot,0)=u_0(\cdot)\in H^1_0(0,\pi),
\end{split}\right. 
\end{equation}
where $\lambda >0$ is a parameter, $a:\mathbb{R^+}\to [m,M]$, for $M>m>0$, is a globally Lipschitz continuous and non-decreasing function and $f \in C^2(\mathbb{R})$ is odd  and satisfies $f'(0)=1$,
\begin{equation} \label{eq_prop_f}
\limsup_{|u|\rightarrow +\infty } \frac{f(u)}{u} <0\
\hbox{ and }\ f''(u)u<0, \ u \neq 0.
\end{equation}

\bigskip

Before we proceed, let us make a simple remark about well-posedness of the problem \eqref{eq_non-local}. Consider the auxiliary initial boundary value problem
\begin{equation}\label{eq_nl_changed}
\left\{
\begin{aligned}
& w_\tau = w_{xx} + \frac{\lambda f(w)}{a(\| w_x\|^2)},  \, x\in (0, \pi),\, \tau>0,\\
& w(0, \tau)=w(\pi, \tau)=0,  \ \  \tau\geq 0,\\
& w(\cdot,0)=u_0(\cdot)\in H^1_0(0,\pi).
\end{aligned}
\right.
\end{equation}

Note that, the problem \eqref{eq_nl_changed} is locally well-posed and the solutions are jointly continuous with respect to time and initial conditions. Namely, the results of \cite[Section 6.7]{CLRBook} can be applied with the same proofs, even though the nonlinearity in \eqref{eq_nl_changed} is nonlocal in space.

Now, making the change of variables $t=\int_0^\tau a(\|w_x(\cdot,\theta)\|^2)^{-1}d\theta$ we have that $u(x,t)=w(x,\tau)$ is the unique solution of \eqref{eq_non-local}. This ensures that \eqref{eq_non-local} is globally well-posed (the same reasoning has been used in \cite{CLLM} to ensure well-posedness). If $u(\cdot ,u_0):\R^+\to H^1_0(0,\pi)$ denotes the solution of \eqref{eq_non-local}, we define the continuous map $T(t):H^1_0(0,\pi)\to H^1_0(0,\pi)$ by $T(t)u_0=u(t,u_0)$, $t\geq 0$.
The family $\{T(t):t\geq 0\}$ is a semigroup and it has a global attractor $\mathcal{A}$, that is, a compact invariant subset of $H^1_0(0,\pi)$ which attracts bounded subsets of $H^1_0(0,\pi)$ under the action of $\{T(t):t\geq 0\}$. The proof of that can also be achieved by employing the usual techniques of semilinear parabolic problems to \eqref{eq_nl_changed} and observing that the attractor of \eqref{eq_nl_changed} is also the attractor for \eqref{eq_non-local}.

A global solution of $\{T(t):t\geq 0\}$ is a continuous function $\xi: \R \mapsto H^1_0(0,\pi)$ such that $\xi(t+s)=T(t)\xi(s)$ for all $t\geq 0$ and for all $s\in \R$. It is well-known that (see, for example, \cite{Hale,CLRBook,BCLBook})
$$
\mathcal{A}=\{\xi(0): \xi:\R \to H^1_0(0,\pi) \hbox{ is a global bounded solution of } \{T(t):t\geq 0\} \}.
$$

Furthermore, this semigroup is gradient and the continuous functional\\ $V:H^1_0(0,\pi) \to \R$ given by
\begin{equation}\label{LyapunovFunction}
V(u) = \frac12\int_0^{\|u'\!\|^2}\!\!\! a(s) ds -\lambda\int_0^\pi \int_0^{u(x)}f(s)ds\, dx 
\end{equation}
is its Lyapunov functional. We also denote $\|u'\|$ by $\|u\|_{H^1_0(0,\pi)}$.

As a consequence, we have that $\{T(t):t\geq 0\}$ is a gradient semigroup (see, for example, \cite{Hale,CLRBook,BCLBook}) and if $\mathcal{E}$ denotes the set of equilibria of \eqref{eq_non-local}, that is, the set of solutions of
\begin{equation}\label{sl:non-local}
\left\{\begin{aligned}
& a(\Vert \varphi'\Vert^2)\varphi''+\lambda f(\varphi) =0,\ x \in (0,\pi) \nonumber\\
& \varphi(0)=\varphi(\pi)=0,
\end{aligned}\right. 
\end{equation} 
then $\mathcal{E}\subset \mathcal{A}$ and $\mathcal{A} = W^u(\mathcal{E})$, where 
\begin{equation*}
\begin{split}
W^u(\mathcal{E}) = \Big\{ u\in H^1_0(0,\pi): & \ \hbox{there exists a global solution } \xi:\R\to H^1_0(0,\pi)\\
& \hbox{ satisfying } \xi(0)=u \hbox{ and } \inf_{\varphi\in \mathcal{E}}\|\xi(t)-\varphi\|_{H^1_0(0,\pi)}\stackrel{t\to -\infty}{\longrightarrow} 0\Big\}.
\end{split}
\end{equation*}

We also know that 
$$
T(t)u_0 \stackrel{t\to \infty}{\longrightarrow} \mathcal{E}.
$$

When the set $\mathcal{E}$ has finite number $N$ of elements, that is $\mathcal{E}=\{\phi_1,\cdots, \phi_N\}$, more can be said of the asymptotics of \eqref{eq_non-local}. Namely, 
\begin{equation}\label{Att_charac}
\mathcal{A} = \bigcup_{n=1}^N W^u(\phi_n),
\end{equation}
for any global bounded solution $\xi:\R \to H^1_0(0,\pi)$ there are $n^-$ and $n^+$, $1\leq n^-,n^+\leq N$, such that 
$$
\phi_{n^-}\stackrel{t\to -\infty}{\longleftarrow}\xi(t)\stackrel{t\to +\infty}{\longrightarrow}\phi_{n^+}.
$$
and, for any $u_0\in H^1_0(0,\pi)$, there exists $n$, $1\leq n\leq N$, such that $T(t)u_0\stackrel{t\to \infty}{\longrightarrow} \phi_n$.

\bigskip

It has been proved in \cite{CLLM} that, assuming that $a$ is non-decreasing, the number of equilibria for \eqref{eq_non-local} is finite. In fact, the analysis done in \cite{CLLM} is for $f(u)=\lambda u - bu^3$, $b>0$, but it holds, without any change in the proofs, for the class of functions considered here.
%

\bigskip

Several aspects of the well-known theory for the corresponding semilinear problem (e.g. $a\equiv 1$) is yet to be unraveled in the quasilinear nonlocal case associated to a non-constant function $a$. For semilinear problems, with the aid of the variation of constants formula, one can prove important results like the saddle point property of hyperbolic equilibria, that is, the local stable and unstable manifolds are graphs tangent to the linear stable and unstable manifolds associated to the linearization around the equilibrium. The simple notion, hyperbolicity of equilibria of semilinear problems, becomes quite challenging in the quasilinear context. The Kirchhoff type diffusivity is yet a nonlocal term which tends to complicate the analysis of the model. In this paper we aim to consider the questions of stability and hyperbolicity of equilibria and start consolidating the notion of hyperbolicity for general systems for which a linearization process may not be available to interpret the local dynamics (at least not directly). 

\bigskip

Our analysis of the model \eqref{eq_non-local} is inspired by the local case $(a\equiv 1)$ for which a lot is known. Also, some analysis of these problems have been started from the point of asymptotics \cite{CH-VA-VC} exploiting the gradient structure of the model and from the point of view of finding solutions for the associated elliptic problem \cite{CH-R}. 

\bigskip

Models of this type arise, for example, in microwave heating of a thin ceramic cylinder (see \cite{Kri}), see also \cite{CH-VA-VC} for additional references on applications of the model. In the words of \cite{CH-VA-VC}, equilibria may appear in a much more complicated fashion than in the semilinear case, making the study of the asymptotic behavior of these problems interesting and challenging. 

\bigskip

Before we proceed, let us explain the meaning of hyperbolicity in the general context of a semigroup for which we may not be able to do a linearization process.

\bigskip Let $\phi$ be an equilibrium for \eqref{eq_non-local}, that is, a solution of \eqref{sl:non-local}. The notion of hyperbolicity of an equilibrium $\phi$ carries the idea that solutions starting near $\phi$ must converge to $\phi$ as $t\to +\infty$, or converge to $\phi$ as $t\to -\infty$ or may not stay near the equilibrium for an infinite interval of time.

\begin{definition}[Topological Hyperbolicity]\label{TH}
	We say that $\phi$ is topologically hyperbolic if $\{\phi\}$ is an isolated invariant set. In other words, there exists a $\delta>0$ for which any global solution $\xi:\R\to H^1_0(0,\pi)$, with $\sup_{t\in \R}\|\xi(t)-\phi\|_{H^1_0(0,\pi)}<\delta$, satisfies $\xi(t)=\phi$, for all $t\in \R$. 
\end{definition}

As a consequence of that (see, \cite{BCLBook}), any solution $\eta^\pm:J^\pm\to H^1_0(0,\pi)$, with $J^+=[t_0,\infty)$ or $J^-=(-\infty,t_0]$, such that $\|\eta^\pm(t)-\phi\|_{H^1_0(0,\pi)}<\delta $ for all $t\in J^\pm$, satisfies $\eta^\pm(t)\stackrel{t\to\pm\infty}{\longrightarrow}\phi$. 

\begin{definition}[Local Stable $W^s_{loc}(\phi)$ and Unstable Sets $W^u_{loc}(\phi)$] 
	
	Given a $\delta-$neigh\-borhood ${\mathcal O}_\delta(\phi)=\{u\in H^1_0(0,\pi): \|u-\phi\|_{H^1_0(0,\pi)}<\delta\}$ of $\phi$, the associated \emph{local stable and unstable sets} of $\phi$, respectively, are given by
	\begin{equation*}
	\begin{split}
	W^{s,\delta}_{loc}(\phi) = &\{u \in H^1_0(0,\pi): T(t) u \in {\mathcal O}_\delta \hbox{ for all } t \geq  0, \hbox{ and } T(t) u\stackrel{t\to\infty}{\longrightarrow} \phi\}, \\
	W^{u,\delta}_{loc}(\phi) = &\{u \in  H^1_0(0,\pi): \hbox{ there exists a global solution } \xi  \hbox{  of }  \{T(t)\colon t\geq 0\}  \\
	& \hspace{40pt} \hbox{ with }\xi(0)=u, \ \xi(t)\in {\mathcal O}_\delta \hbox{ for all }  t\leq 0 \hbox{ and }\xi(t)\stackrel{t\to -\infty}{\longrightarrow} \phi\}.
	\end{split}
	\end{equation*}
\end{definition}

When $\phi$ is topologically hyperbolic and $W^{u,\delta}_{loc}(\phi)=\{\phi\}$ we say that $\phi$ is asymptotically stable otherwise it is said unstable.

\begin{definition}[Strict Hyperbolicity]\label{SH} We say that $\phi$ is {\bf hyperbolic} if there are closed subspaces $X_u$ and $X_s$ of $H^1_0(0,\pi)$ with $H^1_0(0,\pi)=X_u\oplus X_s$ such that
	\begin{itemize}
		\item $\{\phi\}$ topologically hyperbolic.
		\item The local stable and unstable sets are given as graphs of Lipschitz functions $\theta_u:X_u\to X_s$ and 
		$\theta_s:X_s\to X_u$, with Lipschitz constants $L_s$, $L_u$ in $(0,1)$ and such that $\theta_u(0)=\theta_s(0)=0$ and, there exists $\delta_0>0$ such that, given $0<\delta<\delta_0,$ there are $0<\delta''<\delta'<\delta$ such that 
		\begin{equation*}
		\begin{split}
		\{\phi\!+\!(x_u,\theta_u(x_u))\!:\! x_u\!\in\! X_u,\ & \|x_u\|_{H^1_0(0,\pi)}\!<\!\delta''\}\!\subset\! W^{u,\delta'}_{loc}(\phi) \\
		&\!\subset\! \{\phi\!+\!(x_u,\theta_u(x_u))\!:\! x_u\!\in\! X_u, \|x_u\|_{H^1_0(0,\pi)}\!<\!\delta\}\\
		\{\phi\!+\!(\theta_s(x_s),x_s)\!:\! x_s\in X_s,\ & \|x_s\|_{H^1_0(0,\pi)}\!<\!\delta''\}\!\subset\! W^{s,\delta'}_{loc}(\phi)\\
		&\!\subset\! \{\phi\!+\!(\theta_s(x_s),x_s)\!:\! x_s\!\in\! X_s, \|x_s\|_{H^1_0(0,\pi)}\!<\!\delta\}.
		\end{split}
		\end{equation*}
	\end{itemize}
\end{definition}

\bigskip

Since $\{T(t):t\geq 0\}$ is gradient, for topological hyperbolicity is enough to ensure that $\phi$ is an isolated equilibrium (see \cite[Lemma 2.18]{BCLBook}). Strict hyperbolicity for semilinear problems is usually a consequence of dichotomy for the semigroup obtained by linearization around the equilibrium (the spectrum of the solution operator of the linearized equation is disjoint from the unit circle (see \cite[Section 4.1]{BCLBook}) and the saddle point property (see [Theorem 4.4]\cite{BCLBook}).

\bigskip 

Obtaining strict hyperbolicity for quasilinear problems is a task that has not been taken into account in the literature so far. In this paper, we consider the quasilinear, nonlocal, scalar, one dimensional parabolic problem \eqref{eq_non-local} and show that  we can still obtain strict hyperbolicity of equilibria. This is a particularly interesting problem for which one may change from finitely many equilibria to a finite number of clusters (containing possibly a continuous of equilibria) by changing only the assumptions on the function $a$.

\bigskip

Before we proceed, let us briefly recall a little of what is known for the case $a\equiv 1$. That corresponds to the well-known Chafee-Infante equation, given by 
\begin{equation}\label{eq:C-I}
\left\{ \begin{aligned}
& u_t=u_{xx}+\lambda f(u), \  x \in (0,\pi), \ t>0, \\
& u(0,t)=u(\pi,t)=0, \ t\geq 0, \\
& u(\cdot,0)=u_0(\cdot) \in H^1_0(0,\pi).
\end{aligned} \right. 
\end{equation}
In \cite{Chaf-Inf} and \cite{CH-IN}, the authors constructed a bifurcation sequence of the zero equilibrium for \eqref{eq:C-I}. There, the authors also present results on stability of the equilibria for \eqref{eq:C-I}, in the sense of \cite{HE}. The hyperbolicity of the nontrivial equilibria can also be proved for all values of the parameter using the results in \cite{Smoller}.

Suppose that $\varphi$ is an equilibrium of \eqref{eq:C-I}. Since \eqref{eq:C-I} is a semilinear problem, studying the stability and hyperbolicity of $\varphi$ is the same as localize the spectrum $\sigma(L)$ of the operator
$$
Lu=u''+\lambda f'(\varphi)u,
$$
which is called the linearization around $\varphi$. In \cite{HE}, we see that $\varphi$ is stable if all the eigenvalues of the linearization are non-positive and $\varphi$ is hyperbolic if none eigenvalue is in the imaginary axis. We can summarize the results on \eqref{eq:C-I} in the following

\begin{theorem}\label{theo:C-I}
	Consider $N=1,2,\cdots$. If $N^2< \lambda\leq (N+1)^2,$ then 
	\begin{itemize}
		\item[] \underline{Existence}: There are $2N+1$ equilibria of the equation \eqref{eq:C-I} \[\{0\}\cup\left\{\varphi_{j}^\pm: j=1,\dots, N\right\},\] where $\varphi^+_{j}$ and $\varphi^-_{j}$  have $j+1$ zeros in $[0,\pi]$ and $\varphi^-_{j}=-\varphi^+_{j}$.
		\item[] \underline{Stability}: When $ \lambda \leq 1$, $0$ is the only equilibrium of \eqref{eq:C-I} and it is stable. When $\lambda>1$, the positive equilibrium $\varphi_1^+$ and the negative equilibrium $\varphi_1^-$ are stable and any other equilibrium is unstable.
		\item[] \underline{Hyperbolicity}: For all $\lambda>0$, the equilibria are hyperbolic with the exception of $0$ in the cases $\lambda=N^2$,  for  $N=1,2,\cdots$.
	\end{itemize}
\end{theorem}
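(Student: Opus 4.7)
The plan is to handle the three claims in order, using only phase-plane and Sturm-Liouville tools, since the problem is semilinear and the classical linearized stability theorem of \cite{HE} applies. For \emph{existence}, I would multiply $\varphi'' + \lambda f(\varphi) = 0$ by $\varphi'$ to get the conserved energy $\tfrac{1}{2}(\varphi')^2 + \lambda F(\varphi) = E$, where $F(u) = \int_0^u f(s)\,ds$, so the orbits in the phase plane are closed curves encircling the origin. The key object is the half-period
\begin{equation*}
T(\alpha, \lambda) = \int_0^\alpha \frac{d\varphi}{\sqrt{2\lambda(F(\alpha) - F(\varphi))}},
\end{equation*}
which satisfies $T(\alpha, \lambda) \to \pi/(2\sqrt{\lambda})$ as $\alpha \to 0^+$ and $T(\alpha, \lambda) \to \infty$ as $\alpha \to A^-$, where $(-A, A)$ is the maximal interval on which $F$ is positive off the origin; the hypothesis $uf''(u) < 0$ yields the strict monotonicity $\partial_\alpha T > 0$ via the classical period-function lemma. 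A Dirichlet equilibrium with $j + 1$ zeros amounts to solving $T(\alpha, \lambda) = \pi/(2j)$, which admits a unique root $\alpha_j(\lambda) \in (0, A)$ precisely when $\lambda > j^2$; oddness of $f$ pairs positive and negative solutions, producing $2N + 1$ equilibria for $N^2 < \lambda \leq (N+1)^2$ with the claimed zero counts.

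For \emph{stability and hyperbolicity of the trivial equilibrium}, the linearization $L_0 \psi = \psi'' + \lambda \psi$ has explicit Dirichlet spectrum $\{\lambda - k^2\}_{k \geq 1}$, so hyperbolicity of $0$ fails precisely at $\lambda \in \{N^2 : N\geq 1\}$, and the spectrum is strictly negative iff $\lambda < 1$, giving asymptotic stability there. For the borderline $\lambda = 1$, I would argue Lyapunov stability from the functional $V$ in \eqref{LyapunovFunction}, which stays locally positive around $0$ because the effective cubic coefficient on the one-dimensional center manifold (tangent to $\sin x$) is strictly negative, controlled by $f'''(0) < 0$ that follows from $uf''(u) < 0$.

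For \emph{nontrivial equilibria}, I would rewrite the equation as $\varphi_j'' + q_2 \varphi_j = 0$ with $q_2(x) = \lambda f(\varphi_j(x))/\varphi_j(x)$, exhibiting $\varphi_j$ as the $j$th Dirichlet eigenfunction of $-\partial_x^2 - q_2$ (it has $j - 1$ interior zeros), so zero is that operator's $j$th eigenvalue. From $uf''(u) < 0$ one deduces the strict inequality $f'(u) < f(u)/u$ for $u \neq 0$, hence $q_1 := \lambda f'(\varphi_j) < q_2$ pointwise on $\{\varphi_j \neq 0\}$, and the minmax principle gives $\mu_j(L) < 0$ strictly. To promote this single bound to the full Morse count $j - 1$ and hyperbolicity for all $\lambda > j^2$, I would continue along the branch $\lambda \mapsto \varphi_j^\pm(\lambda)$ from the bifurcation at $\lambda = j^2$: there $L$ degenerates to $\partial_x^2 + j^2$, whose spectrum $\{j^2 - k^2\}$ consists of $j - 1$ strictly positive eigenvalues, a simple zero, and a strictly negative tail. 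The strict bound $\mu_j < 0$ forces the zero eigenvalue into the negative half-line immediately (supercritical pitchfork), and Smoller's results \cite{Smoller} rule out secondary bifurcations along the branch for $\lambda > j^2$, so the remaining positive eigenvalues stay strictly positive throughout. This pins the Morse index of $\varphi_j^\pm$ at $j - 1$, giving asymptotic stability of $\varphi_1^\pm$, instability of each $\varphi_j^\pm$ for $j \geq 2$, and hyperbolicity of every nontrivial equilibrium.

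The main technical obstacle is the global-in-$\lambda$ tracking of $\sigma(L)$ along the nontrivial branches, since without the appeal to \cite{Smoller} one has to run a direct Sturm oscillation count that is notoriously delicate. The single analytic input that simultaneously powers both the monotone period function used for existence and the strict Sturm inequality $f'(u) < f(u)/u$ needed to push $\mu_j \leq 0$ to $\mu_j < 0$ and to exclude secondary bifurcations is the concavity hypothesis $uf''(u) < 0$, which is the heart of the entire argument.
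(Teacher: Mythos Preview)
The paper does not supply its own proof of this theorem; it is stated as a summary of classical results, with existence and stability attributed to \cite{Chaf-Inf,CH-IN} and hyperbolicity of the nontrivial equilibria to \cite{Smoller}. Your sketch is a correct and standard reconstruction of those arguments: the time-map/period-function analysis driven by $uf''(u)<0$ for existence, the explicit spectrum $\{\lambda-k^2\}$ for the trivial equilibrium, and the strict potential comparison $\lambda f'(\varphi_j)<\lambda f(\varphi_j)/\varphi_j$ to force $\mu_j(L)<0$.

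One remark on the route you chose for hyperbolicity of the nontrivial branches. Your continuation-from-bifurcation argument, coupled with an appeal to \cite{Smoller} to exclude secondary crossings, is valid but somewhat indirect; the result in \cite[Chapter~24,~\S F]{Smoller} that the paper actually invokes is a direct Wronskian argument that bypasses continuation entirely. Namely, if $Lv=0$ with $v\in H^1_0(0,\pi)$, then $v$ and $\varphi_j'$ both solve $w''+\lambda f'(\varphi_j)w=0$, so their Wronskian is constant; evaluating it at $x=0$ and at an interior critical point of $\varphi_j$ produces a contradiction. This is precisely the mechanism the paper later recycles in Lemma~\ref{Lemma-Henry}(iii) and in the instability proofs of Section~3 for the nonlocal problem, so aligning your argument with it would also make the connection to the rest of the paper cleaner.
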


%
%
%
%
%
%

\bigskip

With respect to problem \eqref{eq_non-local}, the authors in \cite{CLLM} proved, using variational techniques and symmetry properties of solutions, the following bifurcation result
\begin{theorem}\label{theo:exist.equilibria}
	If $a(0)N^2< \lambda\leq a(0)(N+1)^2,$ then there are $2N+1$ equilibria of the equation \eqref{eq_non-local}; $\{0\}\cup\left\{\phi_{j}^\pm: j=1,\dots, N\right\}$,
	where $\phi^+_{j}$ and $\phi^-_{j}$  have $j+1$ zeros in $[0,\pi]$ and $\phi^-_{j}(x)=-\phi^+_{j}(x)$ for all $x\in[0,\pi]$ and $\phi^+_{j}(x)>0$ for all $x\in(0,\frac{\pi}{j})$.
\end{theorem}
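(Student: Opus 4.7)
The plan is to reduce the quasilinear, nonlocal equilibrium problem to the classical Chafee--Infante problem (Theorem~\ref{theo:C-I}) via a scaling argument, and then to close the system through a one-parameter self-consistency equation. If $\varphi \in H^1_0(0,\pi)$ is an equilibrium, then $c := a(\|\varphi'\|^2) \in [a(0), M]$ is a positive scalar and the equilibrium equation reduces to $\varphi'' + (\lambda/c)\, f(\varphi) = 0$ with Dirichlet data. Thus every equilibrium of \eqref{eq_non-local} is a Dirichlet solution $\psi$ of the semilinear Chafee--Infante equation at parameter $\mu := \lambda/c$, subject to the self-consistency condition $\mu \, a(\|\psi'\|^2) = \lambda$, and conversely every such pair produces an equilibrium of \eqref{eq_non-local}.

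By Theorem~\ref{theo:C-I} applied at parameter $\mu$, the nontrivial Dirichlet solutions of $\psi'' + \mu f(\psi) = 0$ are exactly $\psi_j^\pm(\mu)$ with $j \geq 1$ and $j^2 < \mu$, where $\psi_j^+(\mu) > 0$ on $(0, \pi/j)$, has $j+1$ zeros in $[0,\pi]$, and $\psi_j^-(\mu) = -\psi_j^+(\mu)$. For each $j$, introduce
\[
G_j : (j^2, \infty) \to (0, \infty), \qquad G_j(\mu) := \|\psi_j^+{}'(\mu)\|^2.
\]
A phase-plane / time-map analysis of $\psi'' + \mu f(\psi) = 0$ (using the energy identity $\tfrac{1}{2}(\psi')^2 + \mu F(\psi) = \text{const}$ with $F(u) = \int_0^u f$, together with the hypothesis $f''(u)u < 0$) shows that $G_j$ is continuous, strictly increasing, and satisfies $G_j(\mu) \to 0$ as $\mu \downarrow j^2$. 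Consequently the self-consistency map $\Psi_j(\mu) := \mu \, a(G_j(\mu))$ is continuous and strictly increasing on $(j^2, \infty)$ (strictly increasing $\mu$ times a non-decreasing positive factor, using that $a$ is non-decreasing), with $\lim_{\mu \downarrow j^2} \Psi_j(\mu) = j^2 a(0)$ and $\lim_{\mu \to \infty} \Psi_j(\mu) = +\infty$. Hence $\Psi_j(\mu) = \lambda$ admits a unique solution $\mu_j \in (j^2, \infty)$ if and only if $\lambda > j^2 a(0)$.

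Under the hypothesis $a(0) N^2 < \lambda \leq a(0)(N+1)^2$, the condition $\lambda > j^2 a(0)$ holds exactly for $j = 1, \dots, N$, producing the $N$ pairs $\phi_j^\pm := \psi_j^\pm(\mu_j)$, which together with the trivial equilibrium give the claimed $2N+1$ solutions; the zero count, the symmetry $\phi_j^- = -\phi_j^+$, and positivity on $(0, \pi/j)$ all transfer directly from $\psi_j^\pm$. The main technical obstacle is the strict monotonicity of $G_j$: the time map of $\psi'' + \mu f(\psi) = 0$ must be shown to be strictly decreasing in the shooting amplitude $\alpha$, after which the identity $\tau(\alpha, \mu) = \pi/j$ yields a strictly increasing shooting amplitude $\alpha_j(\mu)$ and in turn strict monotonicity of $G_j$ via the energy identity. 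This is where the structural assumption $f''(u)u < 0$ enters essentially; without the monotonicity of $a$, the map $\Psi_j$ could fail to be injective and one could produce continua of equilibria clustered around each mode, in line with the remark made in the introduction.
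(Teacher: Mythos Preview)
The paper does not give its own proof of this theorem; it quotes the result from \cite{CLLM}, whose argument is described only as using ``variational techniques and symmetry properties of solutions.'' Your route is therefore genuinely different: you freeze the nonlocal coefficient, reduce to the classical Chafee--Infante problem at parameter $\mu=\lambda/a(\|\varphi'\|^2)$, and then close the loop with the self-consistency equation $\Psi_j(\mu)=\mu\,a(G_j(\mu))=\lambda$. Strict monotonicity of $\Psi_j$ (from $a$ non-decreasing and $G_j$ strictly increasing) then gives exactly one solution on each branch $j\le N$. This is more elementary than a variational argument and makes the role of the monotonicity hypothesis on $a$ completely explicit, in line with the paper's remark that without it one can get clusters of equilibria.

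One slip to fix: under $f''(u)u<0$ the half-period time map of $\psi''+\mu f(\psi)=0$ is strictly \emph{increasing} in the amplitude $\alpha$, not decreasing (the nonlinearity is softening). This is actually what you need: writing $T_{1/2}(\alpha,\mu)=2\tau(\alpha)/\sqrt{\mu}$ with $\tau$ increasing, the constraint $T_{1/2}(\alpha_j(\mu),\mu)=\pi/j$ forces $\tau(\alpha_j(\mu))=\pi\sqrt{\mu}/(2j)$ to increase with $\mu$, hence $\alpha_j(\mu)$ increases. The monotonicity of $G_j$ then follows cleanly from the energy substitution
\[
G_j(\mu)=\|\psi_j'(\mu)\|^2=2j\int_0^{\alpha_j(\mu)}\sqrt{2\mu\bigl(F(\alpha_j(\mu))-F(u)\bigr)}\,du,
\]
which is manifestly strictly increasing in both $\mu$ and $\alpha_j$. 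With this correction your outline is sound.
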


%

In this paper, our main result is the following 

\begin{theorem}\label{theo:stab-hyp}
	The sequence of bifurcation given in Theorem \ref{theo:exist.equilibria} satisfies:
	\begin{itemize}
		\item[] {\underline{Stability}}: If $\lambda\leq a(0)$, $0$ is the only equilibrium of \eqref{eq:C-I} and it is stable. If $\lambda>a(0)$, the positive equilibrium $\phi_1^+$ and the negative equilibrium $\phi_1^-$ are stable and any other equilibrium is unstable.

		\item[] {\underline{Hyperbolicity}}:  For all $\lambda>0$, the equilibria are hyperbolic with the exception of $0$ in the cases $\lambda=a(0)N^2$,  for  $N=1,2,\cdots$.

	\end{itemize}
\end{theorem}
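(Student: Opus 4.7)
The plan is to work with the auxiliary semilinear (though still nonlocal) problem \eqref{eq_nl_changed}, whose orbits coincide with those of \eqref{eq_non-local} and whose right-hand side $F(w)=w_{xx}+\lambda f(w)/a(\|w_x\|^2)$ admits a Fr\'echet derivative. Using the equilibrium identity $\lambda f(\phi)=-a(\|\phi'\|^2)\phi''$, the derivative at an equilibrium $\phi$ is
\[
Lv \,=\, v'' + \frac{\lambda f'(\phi)}{a(\|\phi'\|^2)}\,v + \frac{2 a'(\|\phi'\|^2)}{a(\|\phi'\|^2)}\,\phi''\,\langle \phi',v'\rangle_{L^2}.
\]
A crucial preliminary observation is that, since $v\in H^1_0(0,\pi)$, the integration by parts $\langle \phi',v'\rangle_{L^2}=-\langle \phi'',v\rangle_{L^2}$ rewrites $L$ as $L=L_0+B$, where $L_0 v=v''+\lambda f'(\phi)v/a(\|\phi'\|^2)$ is self-adjoint Sturm-Liouville and $B=-C\,\langle\phi'',\cdot\rangle_{L^2}\,\phi''$ is self-adjoint, non-positive and of rank one (here $C:=2a'(\|\phi'\|^2)/a(\|\phi'\|^2)\geq 0$). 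Hence $L$ is self-adjoint on $L^2$ with real, purely discrete spectrum. At $\phi\equiv 0$ the perturbation $B$ vanishes and $L$ reduces to $v''+(\lambda/a(0))v$, with eigenvalues $\lambda/a(0)-n^2$; this immediately settles the theorem for the trivial equilibrium (hyperbolic iff $\lambda\neq a(0)N^2$, stable iff $\lambda\leq a(0)$).

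For a nontrivial equilibrium $\phi_j^\pm$, set $\mu_j=\lambda/a(\|(\phi_j^\pm)'\|^2)$. Then $\phi_j^\pm$ solves the Chafee-Infante equation $\phi''+\mu_j f(\phi)=0$ and $L_0$ is precisely the CI linearization at $\phi_j^\pm$ for parameter $\mu_j$; by Theorem \ref{theo:C-I} this $L_0$ is hyperbolic with $j-1$ simple positive eigenvalues, all bounded away from zero. To pass from $L_0$ to $L$ I would exploit the explicit rank-one structure: any $v\in\ker L$ satisfies $L_0 v=C\,\langle\phi'',v\rangle_{L^2}\,\phi''$, so either $\langle\phi'',v\rangle_{L^2}=0$ (forcing $v\in\ker L_0=\{0\}$) or $1=C\,\langle\phi'',L_0^{-1}\phi''\rangle_{L^2}$. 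Consider next the homotopy of admissible diffusivities $a_s=(1-s)a+s\,a(\|(\phi_j^\pm)'\|^2)$, $s\in[0,1]$: each $a_s$ preserves $\phi_j^\pm$ as an equilibrium, $L_0$ is independent of $s$, and $B^{(s)}=-(1-s)C\,\langle\phi'',\cdot\rangle_{L^2}\,\phi''$. Zero belongs to $\sigma(L^{(s)})$ precisely when $(1-s)C\,\langle\phi'',L_0^{-1}\phi''\rangle_{L^2}=1$, so verifying the single scalar inequality
\[
C\,\langle\phi'',L_0^{-1}\phi''\rangle_{L^2}\;<\;1
\]
yields simultaneously the hyperbolicity of $\phi_j^\pm$ and, via Morse-index continuity along $s\in[0,1]$, the fact that $L$ has exactly $j-1$ positive eigenvalues (the endpoint $s=1$ being pure Chafee-Infante).

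Once this spectral dichotomy is in place, strict hyperbolicity in the sense of Definition \ref{SH} is delivered by the standard variation-of-constants/Lyapunov-Perron fixed-point construction applied to \eqref{eq_nl_changed}, producing the local stable and unstable sets as Lipschitz graphs over the spectral subspaces of $L$. The stability dichotomy of the theorem then follows from the Morse-index count: $\phi_1^\pm$ has $j-1=0$ unstable directions and is therefore asymptotically stable, while every $\phi_j^\pm$ with $j\geq 2$ has at least one unstable direction and is unstable. The main obstacle is to verify the inequality displayed above. For $\phi_1^\pm$ it is automatic, because $L_0$ is then negative definite and the left-hand side is $\leq 0$; but for $j\geq 2$ the operator $L_0$ is indefinite and a more delicate argument is required. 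My plan for that step is to use the identity $L_0^{-1}\phi''=\mu_j\,\partial_\mu\phi_j^\pm|_{\mu=\mu_j}$ (obtained by differentiating the CI equation $\phi''+\mu f(\phi)=0$ with respect to $\mu$ along the equilibrium branch, which respects the homogeneous Dirichlet conditions), then convert $\langle\phi'',L_0^{-1}\phi''\rangle_{L^2}$ into a derivative of the Lyapunov functional \eqref{LyapunovFunction} along that branch, where the monotonicity hypothesis $a'\geq 0$ together with the sign condition $f''(u)u<0$ furnish the strict inequality.
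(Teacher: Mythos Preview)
Your setup and reduction are correct and match the paper: the linearization, the treatment of $\phi=0$, and the stability of $\phi_1^\pm$ (via $B\leq 0$ and negative-definiteness of $L_0$) are exactly as in the paper. From that point on, however, your route diverges substantially from the paper's. For $j\geq 2$ the paper never writes down the scalar condition $C\langle\phi'',L_0^{-1}\phi''\rangle<1$; instead it argues case by case. For \emph{instability} it uses a symmetry lemma showing that, when the parity of the eigenfunction index $k$ is opposite to that of $j$, the Sturm--Liouville eigenfunction $u_k$ is $L^2$-orthogonal to $f(\phi_j)$, so $\gamma_k^j\in\sigma(L_0^{\phi_j})$ survives unchanged in $\sigma(L_{\varepsilon_j}^{\phi_j})$; a Wronskian argument then shows $\gamma_1^j>0$ (for $j$ even) or $\gamma_2^j>0$ (for $j$ odd). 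For \emph{hyperbolicity} the paper's argument is considerably more intricate: it assumes $0$ is an eigenvalue, uses the simplicity statement from Davidson--Dodds (Theorem~\ref{DD}) to force symmetry of the eigenfunction, and then, for odd $j$, constructs shifted functions $u_1,u_2$ so that $v_i=u+u_i$ solve the \emph{local} equation $L_0v=0$, derives linear relations among the values $u(k\pi/j)$, and computes an explicit $n\times n$ determinant (Lemma~\ref{lemma:determinante}) to force $u(\pi/j)=0$, a contradiction; the even case is reduced to the odd one by iterated half-interval restriction. Your homotopy/rank-one approach, by contrast, would give both instability and hyperbolicity (and even the exact Morse index $j-1$) in a single stroke, and is far more conceptual.

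The genuine gap is the verification of $C\langle\phi'',L_0^{-1}\phi''\rangle<1$ for $j\geq 2$. Your sentence about ``converting into a derivative of the Lyapunov functional~\eqref{LyapunovFunction}'' is too vague to constitute a proof, and the functional~\eqref{LyapunovFunction} is not obviously the right object here. What actually makes your plan work is this: from $L_0(\partial_\mu\phi_j)=-f(\phi_j)=\phi_j''/\mu$ one gets
\[
C\,\langle\phi_j'',L_0^{-1}\phi_j''\rangle \;=\; \frac{2a'}{a}\,\mu\,\langle\phi_j'',\partial_\mu\phi_j\rangle \;=\; -\frac{a'\mu}{a}\,\partial_\mu\|\phi_j'\|^2,
\]
so the desired strict inequality follows as soon as $\partial_\mu\|\phi_j'\|^2\geq 0$. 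For $j=1$ this is immediate because $L_0$ is negative definite, hence $\partial_\mu\|\phi_1'\|^2=-2\mu\langle L_0\partial_\mu\phi_1,\partial_\mu\phi_1\rangle>0$. For $j\geq 2$ you still need an argument; the cleanest is the scaling $\phi_j(\mu)|_{[0,\pi]}\leftrightarrow\phi_1(\mu/j^2)|_{[0,\pi]}$ (built from the $\pi/j$-periodic structure of $\phi_j$), which gives $\|\phi_j'(\mu)\|^2=j^2\|\phi_1'(\mu/j^2)\|^2$ and hence $\partial_\mu\|\phi_j'\|^2=\partial_\nu\|\phi_1'(\nu)\|^2|_{\nu=\mu/j^2}>0$. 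This is where the concavity hypothesis $f''(u)u<0$ enters (it is what guarantees the CI branch is smooth and the $j=1$ linearization is strictly negative). Once you fill this in, your argument is complete and, in my view, more transparent than the paper's determinant computations; what the paper's approach buys instead is that it never needs to differentiate the equilibrium branch or invoke any monotonicity of $\mu\mapsto\|\phi_j'(\mu)\|^2$---it works entirely at the fixed equilibrium, exploiting its internal symmetries.
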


\bigskip

Observe that, Theorem \ref{theo:exist.equilibria}  assures that we have the existence of a sequence of bifurcation of equilibria for \eqref{eq_non-local}. Here, our aim is to obtain information about the asymptotic behavior and try to formulate a result for the equilibria of \eqref{eq_non-local} such as Theorem \ref{theo:C-I}.

Since \eqref{eq_non-local} is a quasilinear nonlocal problem, linearization procedures leading to interesting information about the original systems are, in general, not available (for the local case, see \cite{Phillipo}). Hence the techniques used in Theorem \ref{theo:C-I} will not apply.

Notice that \eqref{eq_non-local} is a quasilinear problem, so we cannot immediately apply the techniques used in \eqref{eq:C-I}. To circumvent this inconvenience we will work with \eqref{eq_nl_changed} instead and then re-interpret the result for \eqref{eq_non-local}. We remark that the time change of variables involved will be different for each solution and one must be careful when transferring results between the two formulations.

\bigskip

Clearly, if $\phi$ is an equilibrium of \eqref{eq_non-local}, then $\phi$ is also an equilibrium of \eqref{eq_nl_changed}.

\bigskip

Therefore, for the equation \eqref{eq_nl_changed}, we can talk about linearization around some equilibrium and make a spectral analysis of the resulting linear operator. If $\phi$ is an equilibrium of $\eqref{eq_nl_changed}$, then this linearization procedure results in the operator $L: D(L)\subset H^1_0(0,\pi)\to H^1_0(0,\pi)$, with $D(L)=H^2(0,\pi)\cap H^1_0(0,\pi)$ and 
\begin{equation}\label{eq:linearization}
L v=v'' +\lambda\frac{f'(\phi)}{a(\|\phi'\|^2)}v-\tfrac{2\lambda^2 a'(\|\phi'\|^2)}{a(\|\phi'\|^2)^3}f(\phi)\int_{0}^{\pi} f(\phi(s))v(s)ds.
\end{equation}

Observe that the linearization around $\phi$ is a self-adjoint operator with compact resolvent having a nonlocal term. Because of that, its spectrum consists only of eigenvalues, which are not necessarily simple.

%
%
%

\bigskip

We start Section 2 showing that there exists a linear operator $L$ whose spectrum carries the information about the stability, instability and hyperbolicity of equilibria for \eqref{eq_nl_changed}. Then we present some existing results for nonlocal linear operators and also some facts about eigenvalues and eigenvectors of \eqref{eq:linearization}. In section 3, we develop the results on stability of equilibria of \eqref{eq_non-local}. In Section 4, we finally conclude the proof of Theorem \ref{theo:stab-hyp} by showing {the} hyperbolicity of the equilibria of \eqref{eq_non-local}. In Section 5, we discuss the results in this paper.

\section{The linearized operator and some spectral properties of associated nonlocal operators}\label{S2}

Again, the linearization of \eqref{eq_nl_changed} around $\phi$ is given by
\begin{equation}\label{Lepsphi}
L_{\varepsilon}^\phi v=v'' +\frac{\lambda f'(\phi)}{a(\|\phi'\|^2)}v+\varepsilon  f(\phi)\int_{0}^{\pi} f(\phi(s))v(s)ds,
\end{equation}
in the particular case $\varepsilon=-\tfrac{\lambda^22a'(\|\phi'\|^2)}{a(\|\phi'\|^2)^3} <0$.

In fact, 
we have 
\begin{equation*}
\frac{f(u+h)}{a(\|u'+h'\|^2)}\!-\!\frac{f(u)}{a(\|u'\|^2)}\!=\!\frac{f(u+h)-f(u)}{a(\|u'+h'\|^2)}\!+\!\left(\!\frac{a(\|u'\|^2)-a(\|u'+h'\|^2)}{a(\|u'+h'\|^2)a(\|u'\|^2)}\!\right)\!f(u).
\end{equation*}
Now, for $u,h\in H^1_0(0,\pi)$, and $\sigma_i:[0,\pi]\to (0,1)$, $i=1,2$, \[f(u+h)-f(u)= f'({u+\sigma_1 h})h\] and 
$$
a(\|u'\|^2)-a(\|u'+h'\|^2)\!=\!-a'({ (1-\sigma_2)\|u'\|^2\!+\! \sigma_2 \|u'+h'\|^2})\left(\|u'\!+\!h'\|^2\!-\!\|u'\|^2\right).
$$

Define $D \in L(H^1_0(0,\pi), L^2(0,\pi))$ by
\begin{equation*}
Dh=\frac{f'(u)h}{a(\|u'\|^2)}-\frac{2a'(\|u'\|^2)}{a(\|u'\|^2)^2}\left(\int_{0}^{\pi} u'h'\right)f(u).
\end{equation*}

It follows, from the above, that
$$
\frac{1}{\|h'\|}\left\|\frac{f(u+h)}{a(\|u'+h'\|^2)}-\frac{f(u)}{a(\|u'\|^2)} - D h\right\| \stackrel{\|h'\| \rightarrow 0}{\longrightarrow }0.
$$

Therefore, the linearization of \eqref{eq_nl_changed} around $\phi$ is given by the equation
\begin{equation*}
v_t = Lv
\end{equation*}
where $D(L)=H^2(0,\pi)\cap H^1_0(0,\pi)$ and 
$$
Lv=v'' +\frac{\lambda f'(\phi)}{a(\|\phi'\|^2)}v+\frac{2\lambda a'(\|\phi'\|^2)}{a(\|\phi'\|^2)^2}\left(\int_{0}^{\pi} \phi''v\right)f(\phi), \ v\in D(L).
$$
Since $\phi$ is an equilibrium, we conclude that $L=L_{\varepsilon}^\phi$.

\bigskip

We recall that, from the above, we may infer that the stability, instability and hyperbolicity of the equilibrium solution $\phi$ of \eqref{eq_nl_changed} from the analysis of the spectrum of $L_\varepsilon^\phi $. In fact, if all the eigenvalues of $L_\varepsilon^\phi$ are negative, $\phi$ is exponential stability, if at least one eigenvalue of $L_\varepsilon^\phi$ is positive, $\phi$ will be unstable and, if $0$ does not belong to the spectrum of $L_\varepsilon^\phi$, $\phi$ is hyperbolic (see \cite[Sections 5.1 and 5.2]{HE}).

\bigskip

Since the change of variables that allows us to relate solutions of \eqref{eq_nl_changed} with solutions of \eqref{eq_non-local} does not interfere with the state variable, the local unstable and stable sets for \eqref{eq_nl_changed} and \eqref{eq_non-local} will also have the same invariance properties. Also, if a solution converges to an equilibrium of \eqref{eq_nl_changed} its corresponding solution of \eqref{eq_non-local} will also converge to the same equilibrium (as $t$ tends to plus or minus infinity). The rate of convergence is scaled by the change of variables (bounds on $a$). Hence, the unstable and stable sets remain the same. In particular, stability and instability properties are the same. For hyperbolicity, if zero is not an eigenvalue of $L$, the linear stable and unstable manifolds associated to $L$ are the spaces $X_s$ and $X_u$ of Definition \ref{SH} (we will return to this in Sections 3 and 4).

Having established that, the knowledge of the spectrum will provide full information about the stability, instability and hyperbolicity of equilibria we set out to analyze it.

\bigskip

For $\varepsilon \in \mathbb{R}$, define the operator $L_\varepsilon: H^2(0,\pi)\cap H^1_0(0,\pi)\subset L^2(0,\pi)\to L^2(0,\pi) $
\begin{equation}\label{eq:nl-lineariz}
L_\varepsilon u(x) = u'' + p(x)u +\varepsilon c(x)\int_{0}^{\pi} c(s)u(s)ds.
\end{equation}
where $p,c:[0,\pi]\to \R$ are continuous functions with $c\not\equiv 0$.

When $\varepsilon=0$, the operator $L_0u=u''+p(x)u$ is a 
Sturm-Liouville operator. Hence, $L_0$ is a self-adjoint with compact resolvent and its spectrum consists of a decreasing sequence of simple eigenvalues, that is,
$$
\sigma(L_0)=\{\gamma_j: j=1,2,3\cdots \} 
$$
with, $\gamma_j > \gamma_{j+1}$ and $\gamma_j \longrightarrow -\infty$ as  $j \rightarrow +\infty$.

\bigskip 

Note that, for all $\varepsilon \in \mathbb{R}$, we can decompose $L_\varepsilon$ as sum of two operators 
$$
L_\varepsilon u=L_0 u + \varepsilon Bu
$$
where $Bu=c(x)\int_0^\pi c(s)u(s)ds$, for all $u \in H^2(0,\pi)\cap H^1_0(0,\pi)$, is a bounded operator with rank one. It is easy to see that $L_\varepsilon$ is also self-adjoint with compact resolvent. 

\bigskip

Then, we write $\{ \mu_j(\varepsilon): j=1,2,3,\cdots\}$ to represent the eigenvalues of $L_\varepsilon$, ordered in such a way that, for $j=1,2,3,\cdots$, the function $\mathbb{R}\ni \varepsilon \mapsto \mu_j(\varepsilon)\in \R$ satisfies $\mu_j(0)=\gamma_j$.

\bigskip

Throughout this paper we will use, in an essential way, the Theorems 3.4 and 4.5 of \cite{DavidsonDodds}. We summarize these results in the following
\begin{theorem}\label{DD}
Let $L_\varepsilon$ and $\{ \mu_j(\varepsilon): j=1,2,3,\cdots \}$ be as defined before. The following holds:
	\begin{itemize}
	 \item[i)] For all $j=1,2,3,\cdots $, the function  $\mathbb{R}\ni \varepsilon \mapsto \mu_j(\varepsilon)\in \R$ is non-decreasing.
	 \item[ii)] If for some $j=1,2,3\cdots $ and $\varepsilon \in \mathbb{R}$, $\mu_j(\varepsilon)\notin \{\gamma_k:k =1,2,3,\cdots \}$, then $\mu_j(\varepsilon)$ is a simple eigenvalue of $L_\varepsilon$. 
	\end{itemize}
\end{theorem}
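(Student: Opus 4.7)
The plan is to exploit the decomposition $L_\varepsilon = L_0 + \varepsilon B$, where $Bu = c(\cdot)\int_0^\pi c(s)u(s)\,ds$ is a bounded, self-adjoint, rank-one operator on $L^2(0,\pi)$ that is moreover positive semidefinite, since $\langle Bu,u\rangle_{L^2} = \bigl|\int_0^\pi c(s)u(s)\,ds\bigr|^2 \geq 0$. Both conclusions will follow from this structure together with the Sturm--Liouville properties of $L_0$.

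For (i), I would invoke the max-min (Courant--Fischer) variational characterization, which applies since $L_\varepsilon$ is self-adjoint, bounded above, and has compact resolvent. Writing the $j$-th largest eigenvalue of $L_\varepsilon$ (counted with multiplicity) as
\[
\tilde\mu_j(\varepsilon) = \max_{\substack{V\subset D(L_0)\\ \dim V = j}} \min_{\substack{u\in V\\ \|u\|_{L^2}=1}} \langle L_\varepsilon u, u\rangle_{L^2},
\]
the map $\varepsilon \mapsto \langle L_\varepsilon u,u\rangle = \langle L_0 u, u\rangle + \varepsilon \langle Bu,u\rangle$ is non-decreasing at each fixed $u$ because $\langle Bu,u\rangle \geq 0$; taking min and then max preserves monotonicity. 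Since the $\gamma_j$ are simple and strictly ordered, continuity of $\varepsilon \mapsto \mu_j(\varepsilon)$ with $\mu_j(0)=\gamma_j$ identifies $\mu_j \equiv \tilde\mu_j$, yielding (i).

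For (ii), I would argue directly from the eigenvalue equation. Let $\mu = \mu_j(\varepsilon) \notin \sigma(L_0)$ with associated eigenvector $u$, and set $\alpha := \int_0^\pi c(s)u(s)\,ds$. Then $L_\varepsilon u = \mu u$ rearranges to $(L_0 - \mu I)u = -\varepsilon\alpha c$. Since $L_0 - \mu I$ is invertible, $\alpha = 0$ would force $u=0$; hence $\alpha\neq 0$ and $u = -\varepsilon\alpha (L_0 - \mu I)^{-1} c$. Every eigenvector is therefore a scalar multiple of the single vector $(L_0 - \mu I)^{-1} c$, so the eigenspace is one-dimensional and $\mu$ is simple.

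The main obstacle is conceptual rather than technical: one must verify that the continuation labeling $\mu_j(\varepsilon)$ coincides with the max-min ordering globally in $\varepsilon$, not merely near $\varepsilon=0$. Fortunately (ii) is self-reinforcing here, since branches can only meet at values where $\mu_j(\varepsilon)\in \sigma(L_0)$, a discrete set; away from these isolated points the labeling is rigidly tracked by continuity, and the monotonicity of (i) then extends across any such coincidence.
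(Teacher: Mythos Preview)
The paper does not actually prove this theorem: it is stated as a summary of Theorems~3.4 and~4.5 of Davidson--Dodds \cite{DavidsonDodds} and is quoted without argument. Your proposal therefore supplies a proof where the paper provides only a citation, and both parts of your argument are correct. The proof of (ii) via $(L_0-\mu I)u=-\varepsilon\alpha c$ and invertibility of $L_0-\mu I$ is the standard and cleanest route; the proof of (i) via the Courant--Fischer max--min principle and positive semidefiniteness of $B$ is likewise the natural one.

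Your final paragraph is over-cautious. The paper's enumeration $\mu_j(\varepsilon)$ is simply the decreasing ordering of the eigenvalues (with multiplicity), which automatically satisfies $\mu_j(0)=\gamma_j$ because the $\gamma_j$ are already strictly decreasing; there is no separate ``continuation labeling'' to reconcile with the ordered one, so the obstacle you raise does not arise and (i) follows directly from max--min without any appeal to (ii). Even if one insisted on tracking analytic branches instead, the Hellmann--Feynman formula $\mu_j'(\varepsilon)=\langle Bu_j(\varepsilon),u_j(\varepsilon)\rangle\geq 0$ gives monotonicity of those as well, so either reading is fine.
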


So far as we can tell, there are several works exploring properties of operators such as $L_\varepsilon$ (see, for example,\cite{Freitas,Catchpole,DavidsonDodds,Dodds}). 


%


\bigskip



Suppose that $a(0)N^2< \lambda \leq a(0)(N+1)^2,$ for $N=1,2,\cdots$. In that case, Theorem \ref{theo:exist.equilibria} ensures that the set of equilibria is given by
$$
\left\{ \phi_j^\pm : 0\leq j \leq N \right\} \mbox{ where } \phi_j^- = - \phi_j^+ \mbox{ and } \phi_0^\pm =0.
$$

According to Theorem \ref{theo:exist.equilibria}, if $\phi$ is an equilibrium of \eqref{eq_nl_changed} then $\psi=-\phi$ is also an equilibrium. Since $f$ is odd and $f'$ is an even function, we have $f(\psi)=-f(\phi)$ and $f'(\psi)=f'(\phi)$. Hence, the linearization around $\phi$ and around $\psi$ are the same and we can restrict our analysis to one of them. 

As a consequence of that, for $j=1,\cdots, N$, we only need to study the stability and hyperbolicity of $\phi_j^+$ and the same result will hold for $\phi_j^-$, automatically. For this reason, in what follows, we write $\phi_j$ to denote $\phi_j^+$ and restrict our analysis to this case.

We start with some facts about the local part of $L^\phi_{0}$. 
Note that, if $\phi$ is an equilibrium for \eqref{eq_nl_changed},
$$
\frac{f'(\phi(\pi-x))}{a(\|\phi'\|^2)}=\frac{f'(\pm\phi(x))}{a(\|\phi'\|^2)}=\frac{f'(\phi(x))}{a(\|\phi'\|^2)},\ x \in [0,\pi],
$$
where we have used the following facts:
\begin{itemize}
	\item[i)] $\phi_{j}(\pi -x)=(-1)^{j-1}\phi_{j}(x),\  x \in [0,\pi], \ j=1,\dots, N, $ and 
	\item[ii)] $f'(-u)=f'(u)$ for all $u \in \mathbb{R}$.
\end{itemize}

\section{Stability and instability of the equilibria}

Let $\phi\neq 0$ with $\phi'(0)>0$ be an equilibrium solution of \eqref{eq_non-local} and consider the semilinear problem 
\begin{equation}\label{sl_non-local}
\left\{\begin{split}
& u_t=\bar{a}u_{xx}+\lambda f(u),\ x \in (0,\pi), \  t>0,\\
& u(0,t)=u(\pi,0)=0, \ t\geq 0, \\
& u(\cdot,0)=u_0(\cdot)\in H^1_0(0,\pi),
\end{split}\right. 
\end{equation}
with $\bar{a}=a(\Vert \phi'\Vert^2)$. Note that $\phi$ is also an equilibrium of \eqref{sl_non-local} and the linearization of \eqref{sl_non-local} around $\phi$ is given by
\begin{equation}\label{l_non-local}
\left\{\begin{split}
& w_t=\bar{a}w_{xx}+\lambda  f'(\phi)w,\ x \in (0,\pi), \  t>0,\\
& w(0,t)=w(\pi,0)=0, \ t\geq 0, \\
& w(\cdot,0)=w_0(\cdot)\in H^1_0(0,\pi),
\end{split}\right. 
\end{equation}

The following result synthesizes the spectral properties of the linearized operator

\begin{lemma}\label{Lemma-Henry}
The spectrum of the operator
\begin{equation}
\label{Op_A}
\left\{
\begin{split}
& L_0^\phi:D(L_0^\phi)\subset L^2(0,\pi)\to L^2(0,\pi), \\
& D(L_0^\phi)=H^2(0,\pi)\cap H^1_0(0,\pi), \\
& L_0^\phi v=v''+ \frac{\lambda f'(\phi)}{a(\|\phi'\|^2)}v,\  v\in D(L_0^\phi).
\end{split}
\right.
\end{equation}
satisfies
\begin{itemize}
\item[i)] If $\phi(x)> 0$ in $(0,\pi)$ then $L_0^\phi$ has only negative eigenvalues.
\item[ii)] If $\phi(x)= 0$ for some $x \in (0,\pi)$ then $L_0^\phi$ has at least one positive eigenvalue.
\item[iii)] $0$ is always in the resolvent of $L_0^\phi$.
\end{itemize}
\end{lemma}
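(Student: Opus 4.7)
My plan is to use a Sturm--Liouville comparison argument. Setting $\bar a = a(\|\phi'\|^2)$, $q(x)=\lambda f'(\phi(x))/\bar a$, and $\tilde q(x) = \lambda f(\phi(x))/(\bar a\,\phi(x))$ (extended continuously by $\lambda/\bar a$ at zeros of $\phi$, since $f(0)=0$ and $f'(0)=1$), I introduce the auxiliary operator $\tilde L v = v'' + \tilde q v$ on $D(L_0^\phi)$. The equilibrium equation $\bar a\phi''+\lambda f(\phi)=0$ reads $\tilde L\phi=0$, so $\phi=\phi_j$ is a Dirichlet eigenfunction of $\tilde L$ with eigenvalue $0$; since $\phi_j$ has exactly $j-1$ interior zeros by Theorem~\ref{theo:exist.equilibria}, Sturm's oscillation theorem identifies $\phi_j$ as the $j$-th Dirichlet eigenfunction of $\tilde L$, so its $j$-th eigenvalue is $\tilde\mu_j=0$.

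The sign assumption $f''(u)u<0$ for $u\neq 0$ will let me compare potentials. Setting $g(u)=uf'(u)-f(u)$ yields $g(0)=0$ and $g'(u)=uf''(u)<0$ for $u\neq 0$, whence $g<0$ on $(0,\infty)$ and $g>0$ on $(-\infty,0)$; dividing by $u$ in each case gives $f'(u)<f(u)/u$ for all $u\neq 0$. Because the zero set of $\phi_j$ is finite, $\tilde q-q>0$ on a set of full measure, and strict monotonicity of Dirichlet Sturm--Liouville eigenvalues in the potential then gives $\mu_k(L_0^\phi)<\tilde\mu_k$ for every $k\geq 1$. Part (i) is then immediate: if $\phi>0$ on $(0,\pi)$ then $j=1$ and $\mu_1(L_0^\phi)<\tilde\mu_1=0$, so all eigenvalues are negative.

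For Part (iii), suppose $0$ is a Dirichlet eigenvalue of $L_0^\phi$ with eigenfunction $v\neq 0$. Differentiating the equilibrium equation gives $L_0^\phi\phi'=0$ pointwise, and since $v(0)=0<\phi'(0)$ the pair $(v,\phi')$ consists of linearly independent solutions of $u''+qu=0$. The $j$ critical points of $\phi_j$ are exactly the zeros of $\phi_j'$ in $(0,\pi)$; Sturm separation applied to $(v,\phi_j')$ forces the zeros to interlace, so if $v$ has $m$ interior zeros then $\phi_j'$ must have $m+1$ zeros in $(0,\pi)$, giving $m=j-1$. Sturm's oscillation theorem then identifies $v$ as the $j$-th Dirichlet eigenfunction of $L_0^\phi$, so $\mu_j(L_0^\phi)=0$, contradicting $\mu_j(L_0^\phi)<0$ established in the previous paragraph.

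For Part (ii), since $\phi=\phi_j$ with $j\geq 2$ has at least two consecutive critical points $\xi_1<\xi_2$ in $(0,\pi)$, the function $\phi'$ vanishes at $\xi_1,\xi_2$, is sign-definite in between, and satisfies $L_0^\phi\phi'=0$; hence $\phi'|_{[\xi_1,\xi_2]}$ is a first Dirichlet eigenfunction of $L_0^\phi$ on $[\xi_1,\xi_2]$ for eigenvalue $0$. Extending by zero gives a test function of Rayleigh quotient $0$ for $L_0^\phi$ on $(0,\pi)$, so $\mu_1(L_0^\phi)\geq 0$; to upgrade to strict inequality I would argue by contradiction, noting that $\mu_1(L_0^\phi)=0$ would produce a strictly positive first eigenfunction $\psi$ on $(0,\pi)$, and Sturm separation applied to $(\psi,\phi')$ would force $\psi$ to vanish between $\xi_1$ and $\xi_2$, contradicting positivity. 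The main technical delicacy throughout is ensuring strict versions of the standard monotonicity results: strict eigenvalue monotonicity in the potential needs $\tilde q-q$ positive on a set of positive measure (handled by finiteness of $\{\phi_j=0\}$), and the domain-monotonicity step in (ii) needs the Sturm-separation/unique-continuation argument to promote the inequality from weak to strict.
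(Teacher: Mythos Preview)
Your argument is essentially correct. The paper does not prove this lemma at all; it simply cites Chafee--Infante and Henry for (i) and (ii), and Smoller for (iii). What you have written is precisely the classical Sturm--Liouville argument that those references contain: the potential comparison $f'(u)<f(u)/u$ coming from $uf''(u)<0$, the identification of $\phi_j$ as the $j$-th Dirichlet eigenfunction of the auxiliary operator $\tilde L$, and the use of $\phi_j'$ as a second solution of $u''+qu=0$ to run Sturm separation. So your route and the paper's (deferred) route coincide; you have just made it self-contained rather than citing it.

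Two small points worth tightening. First, in part (iii) you use that $\phi_j'$ has \emph{exactly} $j$ zeros in $(0,\pi)$; this is true, but it deserves one line of justification (at any critical point of $\phi_j$ in a positive arch the equation gives $\phi_j''=-\lambda f(\phi_j)/\bar a<0$, so every critical point there is a strict maximum, forcing exactly one per arch; similarly for negative arches). Second, your sentence ``if $v$ has $m$ interior zeros then $\phi_j'$ must have $m+1$ zeros'' is stated as a one-way implication, whereas the conclusion $m=j-1$ really uses the two-sided interlacing (that $v$ also cannot have interior zeros before the first or after the last zero of $\phi_j'$); this is implicit in Sturm separation but would benefit from being said explicitly. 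Neither of these is a genuine gap.
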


The parts $i)$ e $ii)$ were proved by \cite{Chaf-Inf} and can also be found in \cite[Section 5.3]{HE}. Part $iii)$ is a consequence of the results in \cite[Section F of Chapter 24]{Smoller}.

Our goal is to show that, the conclusion of Lemma \ref{Lemma-Henry} remains the same when $L_0^\phi$ is replaced by $L_\varepsilon^\phi$, with $\varepsilon=-\tfrac{\lambda^2 2a'(\|\phi'\|^2)}{a(\|\phi'\|^2)^3}$.

\subsection{Stability and hyperbolicity of the equilibria $0$ and $\phi_1$}

$\quad $

\smallskip

If $\phi =0$, since $f(0)=0$, $f'(0)=1$, the operator $L_{\varepsilon_0}^0$ reduces to
$$
L_{\varepsilon_0}^0 u=u''+\frac{\lambda}{a(0)}u.
$$
Its spectrum is given by $\left\{-n^2+\tfrac{\lambda}{a(0)}:\ n=1,2,\cdots \right\}$. In particular, we have that when $\lambda< a(0), $ $0$ is exponentially stable, and it is the only equilibrium solution (with the assumption that $a$ is non-decreasing) of \eqref{eq_nl_changed}. Hence, $0$ is globally exponentially stable.

Thus, there is a neighborhood $V$ of $0$ and constants $K,\beta>0$ in such a way that, for all solutions $w$ of \eqref{eq_nl_changed} with $w(\cdot, 0)=u_0 \in V$ we have
$$
\|w(\tau)-0\|_{H^1_0(0,\pi)}\leq Ke^{-\beta \tau}\|u_0\|_{H^1_0(0,\pi)}, \quad \forall \tau \geq 0.
$$

Now, recall that, given a solution $u$ of \eqref{eq_non-local} we have $u(\cdot,0)=u_0 \in V$, 
$u(t)=w(\tau)$ where $t= \int_0^\tau a(\|w_x(\cdot,\theta)\|^2)^{-1}d\theta$, hence $\tfrac{\tau}{M}\leq t\leq \tfrac{\tau}{m}$.

 Therefore, for all $t \geq 0$,
$$
\|u(t)\|_{H^1_0(0,\pi)}\leq \|w(\tau)-0\|_{H^1_0(0,\pi)}\leq Ke^{-{\beta}\tau }\|u_0\|_{H^1_0(0,\pi)} \leq Ke^{-{\beta} mt}\|u_0\|_{H^1_0(0,\pi)}.
$$

Therefore, $0$ is an equilibrium exponentially stable of \eqref{eq_non-local}. 

\begin{remark}
Observe how interesting the previous result is. The problem \eqref{eq_non-local} is a quasilinear problem for which we are not able to talk about linearization around an equilibrium. Despite that, we can obtain a result on stability using the  related semilinear problem \eqref{eq_nl_changed}. The exponential rate of attraction is different for each solution starting in $V$ but it has a common bound in the case considered here.
\end{remark}
\bigskip

Suppose that $\lambda > a(0)$. Clearly, the equilibrium $0$ is unstable in this situation, that is, there exists a $\delta_0>0$ and constants $K,\beta>0$ in such a way that, for each $\delta<\delta_0$, a $0<\delta'<\delta$ and a global solution of \eqref{eq_nl_changed} $\eta:\R \to H^1_0(0,\pi)$ such that
\begin{equation*}
\begin{split}
&u_0\in H^1_0(0,\pi),\  \|u_0\|_{H^1_0(0,\pi)}<\delta',\\
&\eta(0)=u_0\ \hbox{and}\ \|\eta(\tau)\|_{H^1_0(0,\pi)}\leq \delta, \hbox{ for all } \tau\leq 0, \\
&\|\eta(\tau)-0\|_{H^1_0(0,\pi)}\leq K e^{\beta \tau}\|u_0 - 0\|_{H^1_0(0,\pi)}, \hbox{ for all } \tau \leq 0.
\end{split}
\end{equation*}
as before, making the change in the time-variable $t=\int_0^\tau a(\|\eta_x(\cdot,\theta)\|^2)^{-1}d\theta$, $\xi(t)=\eta(\tau)$ is a global solution of \eqref{eq_non-local} and, in this case, $\tfrac{\tau}{m} \leq t \leq \tfrac{\tau}{M}$. Therefore, for all $t \leq 0$,
$$
\|\xi(t)\|_{H^1_0(0,\pi)}\leq \|\eta(\tau)-0\|_{H^1_0(0,\pi)}\leq Ke^{{\beta}\tau}\|u_0\|_{H^1_0(0,\pi)}\leq Ke^{{\beta} mt}\|u_0\|_{H^1_0(0,\pi)}.
$$

Also, from Theorem \ref{theo:exist.equilibria}, there exists a positive equilibrium $\phi_1^+$ (and also $\phi_1^-=-\phi_1^+$) for \eqref{eq_nl_changed}, that is, $\phi_1^+(x)>0$ em $(0,\pi)$ and $\phi_1^+(0)=\phi_1^+(\pi)=0$. As remarked before, the linearization around both equilibria $(\phi_1^\pm)$ are the same, so we consider only the case $\phi_1^+$ and denote both by $\phi_1$.

\begin{theorem}
Assume that $a$ is also a non-decreasing function. Then $\phi_1$ exponentially stable and, consequently, hyperbolic.
\end{theorem}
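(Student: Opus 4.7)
The plan is to locate the spectrum of the linearization $L_\varepsilon^{\phi_1}$ of \eqref{eq_nl_changed} around $\phi_1$, with
$$\varepsilon = -\tfrac{2\lambda^2 a'(\|\phi_1'\|^2)}{a(\|\phi_1'\|^2)^3},$$
and show that it is strictly contained in the negative real axis. Once that is done, standard results for semilinear problems (e.g.\ \cite[Sections~5.1,~5.2]{HE}) give exponential stability of $\phi_1$ for \eqref{eq_nl_changed}, from which exponential stability for \eqref{eq_non-local} follows via the reparametrisation $t=\int_0^\tau a(\|\eta_x(\cdot,\theta)\|^2)^{-1}d\theta$ already used for the zero equilibrium.

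The key observation is the sign of $\varepsilon$. Since $a$ is assumed non-decreasing, $a'\geq 0$ a.e., and therefore $\varepsilon\leq 0$. Next, because $\phi_1=\phi_1^+>0$ on $(0,\pi)$, part~(i) of Lemma~\ref{Lemma-Henry} applies to $L_0^{\phi_1}$ and yields that every eigenvalue $\gamma_j=\mu_j(0)$ of the local Sturm--Liouville operator $L_0^{\phi_1}$ is strictly negative. I would then invoke Theorem~\ref{DD}(i) applied with $p(x)=\lambda f'(\phi_1)/a(\|\phi_1'\|^2)$ and $c(x)=f(\phi_1(x))$: the branches $\varepsilon\mapsto \mu_j(\varepsilon)$ are non-decreasing, so
$$\mu_j(\varepsilon)\leq \mu_j(0)=\gamma_j<0\qquad\text{for all }j\geq 1,$$
since $\varepsilon\leq 0$. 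This places the whole spectrum of $L_\varepsilon^{\phi_1}$ to the left of zero, with a spectral gap bounded away from zero (as $\mu_j(\varepsilon)\to-\infty$). Consequently $L_\varepsilon^{\phi_1}$ is the generator of an exponentially decaying analytic semigroup on $H^1_0(0,\pi)$, and the principle of linearised stability for the semilinear problem \eqref{eq_nl_changed} gives a neighbourhood $V$ of $\phi_1$ and constants $K,\beta>0$ with
$$\|w(\tau)-\phi_1\|_{H^1_0(0,\pi)}\leq K e^{-\beta\tau}\|u_0-\phi_1\|_{H^1_0(0,\pi)},\qquad \tau\geq 0,$$
for all solutions $w$ of \eqref{eq_nl_changed} starting at $u_0\in V$.

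To transfer this to \eqref{eq_non-local} I would repeat verbatim the time-change argument used for $\phi=0$: setting $u(t)=w(\tau)$ with $t=\int_0^\tau a(\|w_x(\cdot,\theta)\|^2)^{-1}d\theta$ gives $\tau\geq m t$ via the lower bound $a\geq m$, so
$$\|u(t)-\phi_1\|_{H^1_0(0,\pi)}\leq K e^{-\beta m t}\|u_0-\phi_1\|_{H^1_0(0,\pi)},\qquad t\geq 0,$$
showing exponential stability (with a uniform rate on $V$) for \eqref{eq_non-local}. For hyperbolicity in the sense of Definition~\ref{SH}, observe that $0\notin\sigma(L_\varepsilon^{\phi_1})$ and that the splitting is trivial: $X_u=\{0\}$, $X_s=H^1_0(0,\pi)$, $\theta_s\equiv 0$, while $W^{u,\delta}_{loc}(\phi_1)=\{\phi_1\}$ by topological hyperbolicity (itself guaranteed since $\phi_1$ is isolated in the finite set $\mathcal{E}$ and $\{T(t)\}$ is gradient, cf.\ \cite[Lemma~2.18]{BCLBook}), and $W^{s,\delta}_{loc}(\phi_1)$ fills an entire neighbourhood of $\phi_1$ by the exponential attraction just proved.

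The step I expect to matter most is the correct invocation of Theorem~\ref{DD} with the non-decreasing monotonicity in $\varepsilon$: it is precisely the sign hypothesis on $a'$ that forces $\varepsilon\leq 0$ and keeps the perturbed eigenvalues to the left of the (already negative) unperturbed ones. Without monotonicity of $a$ the sign of $\varepsilon$ could flip and the branches $\mu_j(\varepsilon)$ could move to the right of zero, so this is where the assumption on $a$ is used in an essential way; everything else is routine translation between \eqref{eq_nl_changed} and \eqref{eq_non-local} via the time reparametrisation.
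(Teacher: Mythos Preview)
Your proposal is correct and follows essentially the same approach as the paper: use Lemma~\ref{Lemma-Henry}(i) to see that all eigenvalues of $L_0^{\phi_1}$ are negative, use the sign of $\varepsilon$ (coming from $a'\geq 0$) together with the monotonicity in Theorem~\ref{DD}(i) to conclude the same for $L_\varepsilon^{\phi_1}$, and then transfer the resulting exponential stability from \eqref{eq_nl_changed} to \eqref{eq_non-local} via the time reparametrisation. Your additional remarks on the trivial splitting $X_u=\{0\}$, $X_s=H^1_0(0,\pi)$ for strict hyperbolicity are consistent with what the paper spells out in its concluding section.
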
 
\begin{proof}
Consider the operators $L^{\phi_1}_{\varepsilon_1}$ and $L^{\phi_1}_{0}$ given by \eqref{eq:linearization} and \eqref{Op_A} with $\phi$ replaced by $\phi_1$. Denote the spectrum of $L^{\phi_1}_{\varepsilon}$ by $\sigma(L^{\phi_1}_{\varepsilon})=\{\mu^1_n(\varepsilon) : n=1,2,3,\cdots \}$.
Observe that $L^{\phi_1}_{\varepsilon_1}$ can be written as 
$$
L^{\phi_1}_{\varepsilon_1}u=L^{\phi_1}_{0}u + \varepsilon_1 f(\phi_1)\int_{0}^{\pi} f(\phi_1)u,
$$
where $\varepsilon_1=- \frac{2\lambda^2  a'(\|\phi_1'\|^2)}{a(\|\phi_1'\|^2)^3}$. Since $a$ is non-decreasing $\varepsilon_1\leq 0$.

Observe that $L^{\phi_1}_{0}$ has only negative eigenvalues since $\phi_1$ is an equilibrium of the equation \begin{equation}\label{eq:C-I1}
u_t= u_{xx}+\frac{f(u)}{a(\|(\phi_1)_x\|^2)}.
\end{equation} 
The linearization around $\phi_1$ of \eqref{eq:C-I1} is given by 
$$
u_t=L^{\phi_1}_{0}u.
$$
Recall that $\phi_1$ is the positive equilibrium of the Chafee-Infante equation \eqref{eq:C-I1} and its stability is well-known (see \cite{Chaf-Inf}). Therefore, we have that \[\cdots < \mu^1_{n+1}(0) < \mu^1_{n}(0) < \cdots < \mu^1_2(0) <\mu^1_{1}(0) < 0.\]

Then, applying Theorem \ref{DD}, the eigenvalues are non-decreasing functions of $\varepsilon$. Since, $\varepsilon_1\leq 0$, we conclude that
$$
\mu^1_n(\varepsilon_1) \leq \mu^1_1(0)<0, \quad \forall n=1,2,3,\cdots
$$
and follows the stability and hyperbolicity of $\phi_1$ for \eqref{eq_nl_changed}. Now, we need to transfer this information to \eqref{eq_non-local}.	

We have that the equilibrium $\phi_1$ \eqref{eq_nl_changed} is exponentially stable by linearization (see \cite[Section 5.1]{HE}). Therefore, there is a neighborhood $V$ of $\phi_1$, and positive constants $K$ and $\beta$ such that, for each $u_0 \in V$, the solution $w$ of \eqref{eq_nl_changed} through $u_0$ satisfies
$$
\| w(\tau)-\phi_1\|_{H^1_0(0,\pi)}\leq K e^{-\beta \tau}\|u_0 - \phi_1\|_{H^1_0(0,\pi)}, \quad \forall \tau \geq0.
$$
Therefore, the solution of \eqref{eq_non-local} is given by $u(t)=w(\tau)$, $t=\int_0^\tau a(\|w_x(\cdot,\theta)\|^2)^{-1}d\theta$, where $w$ is the solution of \eqref{eq_nl_changed} with the same initial condition. Hence, for $u_0 \in V$,
\begin{equation*}
\begin{split}
\| u(t)-\phi_1\|_{H^1_0(0,\pi)}=\| w(\tau)-\phi_1\|_{H^1_0(0,\pi)}&\leq K e^{-\beta\tau}\|u_0 - \phi_1\|_{H^1_0(0,\pi)} \\
&\leq K e^{-{\beta }mt}\|u_0 - \phi_1\|_{H^1_0(0,\pi)},\quad \forall t\geq 0.
\end{split}
\end{equation*}
\end{proof}

\subsection{The instability of the sign changing equilibria}
Let $a(0)N^2<\lambda \leq a(0)(N+1)^2$, $N =2,3,4,\cdots$. 

For $\phi_j=\phi_j^+,$  $j\in\{2,\dots, N\}$, consider the operators $L^{\phi_j}_{\varepsilon_j}$ and $L^{\phi_j}_{0}$ given by \eqref{eq:linearization} and \eqref{Op_A} with $\phi$ replaced by $\phi_j$. Hence,
\[
\begin{split} 
L^{\phi_j}_{\varepsilon_j} u
&=u'' +\frac{f'(\phi_j)}{a(\|\phi_j'\|^2)}u+\varepsilon_j f(\phi_j)\int_{0}^{\pi} f(\phi_j(s))u(s)ds
\end{split}
\]
with $\varepsilon_j=-\frac{2\lambda^2 a'(\|\phi_j'\|^2)}{a(\|\phi_j'\|^2)^3}$. Denote by $\sigma(L^{\phi_j}_{0})=\{\gamma_k^j : k=1,2,3\cdots \}$ the spectrum of $L^{\phi_j}_{0}$. Then, $\gamma_{k+1}^j<\gamma_{k}^j$, for all $k=1,2,3,\cdots$, $\gamma_k^j$ is a simple eigenvalue and we denote by $u_k$ its associated eigenfunction that satisfies $u_k'(0)=1$. We know that $u_k$ has $k+1$ zeros in $[0,\pi].$

Since $L^{\phi_j}_{0}$ is a Sturm-Liouville operator, its eigenvalues are all simple. Hence, for all $x\in [0,\pi],$
$$
u_k(\pi -x)=\left\{
\begin{split}
&u_k(x),\mbox{ if } k \mbox{ is odd}, \\
&-u_k(x),\mbox{ if } k \mbox{ is even}.
\end{split}\right.
$$
Consequently, for all $x \in [0,\frac{\pi}{2}],$
$$
u_k(x+\tfrac{\pi}{2})=\left\{
\begin{split}
&u_k(\tfrac{\pi}{2}-x),\mbox{ if } k \mbox{ is odd}, \\
&-u_k(\tfrac{\pi}{2}-x),\mbox{ if } k \mbox{ is even}.
\end{split}\right.
$$

\begin{lemma}\label{theo:eigenvalues}The following holds
	\begin{itemize}
		\item[i)] If $j$ is even, $\gamma_{2k-1}^j$ is an eigenvalue of $L^{\phi_j}_{\varepsilon_j}$ and $u_{2k-1}$ is an associated eigenfunction, $k =1,2,3,\cdots$.
		\item[ii)] If $j$ is odd, $\gamma_{2k}^j$ is an eigenvalue of $L^{\phi_j}_{\varepsilon_j}$ and $u_{2k}$ is an associated eigenfunction, $k=1,2,3,\cdots$.
	\end{itemize}
\end{lemma}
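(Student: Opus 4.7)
The plan is to exploit the reflection symmetry about $x=\pi/2$ to prove that the nonlocal term in $L^{\phi_j}_{\varepsilon_j}$ annihilates $u_k$ in the cases asserted. Since
\[
L^{\phi_j}_{\varepsilon_j} u_k \;=\; L^{\phi_j}_0 u_k + \varepsilon_j f(\phi_j)\int_0^\pi f(\phi_j(s))u_k(s)\,ds \;=\; \gamma_k^j u_k + \varepsilon_j f(\phi_j)\,I_{j,k},
\]
where $I_{j,k}:=\int_0^\pi f(\phi_j(s))u_k(s)\,ds$, it suffices to show that $I_{j,k}=0$ whenever $j$ and $k$ have different parities (i.e.\ $j$ even and $k$ odd, or $j$ odd and $k$ even). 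If this vanishing holds then $u_k$ is an eigenfunction of $L^{\phi_j}_{\varepsilon_j}$ with eigenvalue $\gamma_k^j$, as claimed.

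The verification of $I_{j,k}=0$ will be a direct parity computation. Recall the listed facts: $\phi_j(\pi-x)=(-1)^{j-1}\phi_j(x)$ and $u_k(\pi-x)=(-1)^{k-1}u_k(x)$ for $x\in[0,\pi]$. Since $f$ is odd, $f(\phi_j(\pi-x))=(-1)^{j-1}f(\phi_j(x))$. Therefore
\[
f(\phi_j(\pi-x))\,u_k(\pi-x) \;=\; (-1)^{j+k}\,f(\phi_j(x))\,u_k(x).
\]
Whenever $j+k$ is odd, the integrand $s\mapsto f(\phi_j(s))u_k(s)$ is odd about $\pi/2$. Splitting the integral at $\pi/2$ and applying the substitution $s\mapsto \pi-s$ on $[\pi/2,\pi]$ gives $I_{j,k}=-I_{j,k}$, hence $I_{j,k}=0$. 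This covers exactly the two cases stated: $j$ even with $k=2k-1$ odd, and $j$ odd with $k=2k$ even.

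The conclusion then follows immediately by substituting back into the identity $L^{\phi_j}_{\varepsilon_j} u_k = \gamma_k^j u_k + \varepsilon_j f(\phi_j)\,I_{j,k}$. I do not foresee any real obstacle here; the proof is essentially a bookkeeping of signs, and the only care needed is to make sure the symmetry relations for $\phi_j$ and for the Sturm--Liouville eigenfunctions $u_k$ are applied correctly, which the lemma's setup has already laid out. The part that requires a little more attention will simply be phrasing the substitution cleanly so that the sign conclusion $I_{j,k}=-I_{j,k}$ is unambiguous.
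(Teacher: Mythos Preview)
Your proposal is correct and follows essentially the same approach as the paper: both arguments show that $\int_0^\pi f(\phi_j)u_k=0$ by exploiting the reflection symmetry about $\pi/2$ of $\phi_j$ and $u_k$, then conclude that $L^{\phi_j}_{\varepsilon_j}u_k=L^{\phi_j}_0 u_k=\gamma_k^j u_k$. The only difference is cosmetic: the paper treats the two cases separately via the shifted identities $\phi_j(\tfrac{\pi}{2}+x)=\pm\phi_j(\tfrac{\pi}{2}-x)$ and $u_k(\tfrac{\pi}{2}+x)=\pm u_k(\tfrac{\pi}{2}-x)$, whereas you package both at once with the single parity factor $(-1)^{j+k}$.
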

\begin{proof} In both cases we prove that $f(\phi_j)$ is orthogonal to $u_k$ and that will imply the desired results.

\begin{itemize}
\item[i)] Recall that $f$ is an odd function and since $j$ is even, we have that $\phi_j(x+\frac{\pi}{2})=-\phi_j(\frac{\pi}{2}-x),$ for $x \in [0,\frac{\pi}{2}].$ Then, for $k$ odd we have that
$$
\begin{aligned}
\int_{0}^{\pi} f(\phi_j(s))u_k(s)ds&=\int_{0}^{\frac{\pi}{2}} f(\phi_j(s))u_k(s)ds+\int_{\frac{\pi}{2}}^{\pi} f(\phi_j(s))u_k(s)ds \\
&\int_{0}^{\frac{\pi}{2}} f(\phi_j(s))u_k(s)ds+\int_0^{\frac{\pi}{2}} f(\phi_j(s+\tfrac{\pi}{2}))u_k(s+\tfrac{\pi}{2})ds \\
&\int_{0}^{\frac{\pi}{2}} f(\phi_j(s))u_k(s)ds+\int_0^{\frac{\pi}{2}} f(-\phi_j(\tfrac{\pi}{2}-s))u_k(\tfrac{\pi}{2}-s)ds \\
&\int_{0}^{\frac{\pi}{2}} f(\phi_j(s))u_k(s)ds-\int_0^{\frac{\pi}{2}} f(\phi_j(\tfrac{\pi}{2}-s))u_k(\tfrac{\pi}{2}-s)ds \\
&\int_{0}^{\frac{\pi}{2}} f(\phi_j(s))u_k(s)ds-\int_0^{\frac{\pi}{2}} f(\phi_j(s))u_k(s)ds = 0
\end{aligned}
$$		
Hence
$$
L^{\phi_j}_{\varepsilon_j}u_k=L^{\phi_j}_{0}u_k-\tfrac{2\lambda^2 a'(\|\phi_j'\|^2)}{a(\|\phi_j'\|^2)^3}f(\phi_j)\int_{0}^{\pi} f(\phi_j)u_k=\gamma_k^j u_k,
$$	
proving the result.

\item[ii)] If $j$ is odd, we have that $\phi_j(x+\frac{\pi}{2})=\phi_j(\frac{\pi}{2}-x),$ for $x \in [0,\frac{\pi}{2}].$ Then, for $k$ even we have that
$$
\begin{aligned}
\int_{0}^{\pi} f(\phi_j(s))u_k(s)ds&=\int_{0}^{\frac{\pi}{2}} f(\phi_j(s))u_k(s)ds+\int_{\frac{\pi}{2}}^{\pi} f(\phi_j(s))u_k(s)ds \\
&\int_{0}^{\frac{\pi}{2}} f(\phi_j(s))u_k(s)ds+\int_0^{\frac{\pi}{2}} f(\phi_j(s+\tfrac{\pi}{2}))u_k(s+\tfrac{\pi}{2})ds \\
&\int_{0}^{\frac{\pi}{2}} f(\phi_j(s))u_k(s)ds-\int_0^{\frac{\pi}{2}} f(\phi_j(\tfrac{\pi}{2}-s))u_k(\tfrac{\pi}{2}-s)ds \\
&\int_{0}^{\frac{\pi}{2}} f(\phi_j(s))u_k(s)ds-\int_0^{\frac{\pi}{2}} f(\phi_j(s))u_k(s)ds = 0
\end{aligned}
$$
\end{itemize}
	
Hence
$$
L^{\phi_j}_{\varepsilon_j}u_k=L^{\phi_j}_{0}u_k-\tfrac{2\lambda^2 a'(\|\phi_j'\|^2)}{a(\|\phi_j'\|^2)^3}f(\phi_j)\int_{0}^{\pi} f(\phi_j(s))u_k(s)ds=\gamma_k^j u_k,
$$
proving the result.
\end{proof}

Observe that the previous result helps us to identify part of the spectrum of $L^{\phi_j}_{\varepsilon_j}$. We can now prove the following

\begin{theorem} The equilibria $\phi_j$ are unstable, for $j\geq 2$ even.	
\end{theorem}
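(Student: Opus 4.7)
The plan is to produce a positive eigenvalue of the full linearized operator $L^{\phi_j}_{\varepsilon_j}$ by combining Lemma \ref{Lemma-Henry}(ii) with Lemma \ref{theo:eigenvalues}(i). Once we have exhibited one positive eigenvalue, the classical semilinear theory applied to \eqref{eq_nl_changed} yields a nontrivial local unstable set for $\phi_j$, and then the same time-change argument used already for the equilibrium $0$ when $\lambda>a(0)$ transfers the instability back to \eqref{eq_non-local}.

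First I would observe that for $j\geq 2$ the equilibrium $\phi_j$ has $j+1\geq 3$ zeros in $[0,\pi]$, so at least one zero lies in the open interval $(0,\pi)$. Consequently Lemma \ref{Lemma-Henry}(ii) applies and yields at least one positive eigenvalue for the local Sturm--Liouville operator $L^{\phi_j}_{0}$. Since the eigenvalues $\gamma_1^j>\gamma_2^j>\cdots$ are ordered decreasingly, this forces $\gamma_1^j>0$. Now applying Lemma \ref{theo:eigenvalues}(i) with $k=1$ (using that $j$ is even) tells us that $\gamma_1^j$ is also an eigenvalue of $L^{\phi_j}_{\varepsilon_j}$, with associated eigenfunction $u_1$. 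Thus $L^{\phi_j}_{\varepsilon_j}$ has a strictly positive eigenvalue.

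Having the positive eigenvalue $\gamma_1^j$ of $L^{\phi_j}_{\varepsilon_j}$, the standard linearization result (\cite[Section 5.1]{HE}) applied to \eqref{eq_nl_changed} produces $\delta_0>0$, constants $K,\beta>0$, and for each $0<\delta<\delta_0$ some $0<\delta'<\delta$ and a global solution $\eta:\mathbb{R}\to H^1_0(0,\pi)$ of \eqref{eq_nl_changed} with $\eta(0)=u_0$ satisfying $\|u_0-\phi_j\|_{H^1_0(0,\pi)}<\delta'$, $\|\eta(\tau)-\phi_j\|_{H^1_0(0,\pi)}\leq\delta$ for all $\tau\leq 0$, and
\[
\|\eta(\tau)-\phi_j\|_{H^1_0(0,\pi)}\leq K e^{\beta\tau}\|u_0-\phi_j\|_{H^1_0(0,\pi)},\quad \tau\leq 0.
\]
Setting $t=\int_0^\tau a(\|\eta_x(\cdot,\theta)\|^2)^{-1}\,d\theta$ and $\xi(t)=\eta(\tau)$, we obtain a global solution $\xi:\mathbb{R}\to H^1_0(0,\pi)$ of \eqref{eq_non-local} with $\xi(0)=u_0$, and the bound $\tau/m\leq t\leq \tau/M$ on the time change gives $\|\xi(t)-\phi_j\|_{H^1_0(0,\pi)}\leq K e^{\beta m t}\|u_0-\phi_j\|_{H^1_0(0,\pi)}$ for $t\leq 0$. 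Hence $\phi_j$ admits a nontrivial local unstable set for \eqref{eq_non-local}, so it is unstable.

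The one conceptual point that is not completely automatic is the identification that Lemma \ref{theo:eigenvalues}(i) picks out exactly the $\gamma^j_1$ guaranteed to be positive by Lemma \ref{Lemma-Henry}(ii); this is taken care of by the decreasing order of the $\gamma^j_k$ together with $j$ being even, which allows us to choose $k=1$. The rest is a direct application of results already set up earlier in the paper.
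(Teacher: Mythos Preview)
Your proof is correct and follows essentially the same route as the paper: show that the first eigenvalue $\gamma_1^j$ of the local operator $L_0^{\phi_j}$ is positive, then invoke Lemma~\ref{theo:eigenvalues}(i) (with $k=1$, $j$ even) to see that $\gamma_1^j$ persists as an eigenvalue of the full nonlocal operator $L_{\varepsilon_j}^{\phi_j}$, and finally transfer the resulting instability back to \eqref{eq_non-local} via the time change. The only difference is cosmetic: the paper establishes $\gamma_1^j>0$ by an explicit Wronskian computation with $\phi_j'$ (essentially reproving Lemma~\ref{Lemma-Henry}(ii) in this case), whereas you simply cite Lemma~\ref{Lemma-Henry}(ii) directly together with the ordering of the eigenvalues---a legitimate and slightly cleaner shortcut.
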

\begin{proof}

In this case, we can find $x_0 \in (0,\pi)$ such that $\phi_j(x_0)<0$ and $\phi_j'(x_0)=0$. Observe that $\phi_j''(x_0)=-\frac{\lambda f(\phi_j(x_0))}{a(\|\phi_j'\|^2)}>0$.

\bigskip

Consider $w$ the solution of
\begin{equation*}
\left\{\begin{aligned}
& 
w''+\frac{\lambda f'(\phi_j)}{a(\|\phi_j'\|^2)}w =0,\ x \in (0,\pi),\\
& w(0)=0, \ w'(0)=1.
\end{aligned}\right.
\end{equation*}

Observe that $v=\phi_j'$ is also a solution of the above problem.  In fact, 
$$
v''=(\phi_j'')' = \frac{-\lambda}{a(\|\phi_j'\|^2)}(f(\phi_j))'=\frac{-\lambda}{a(\|\phi_j'\|^2)}f'(\phi_j)\phi_j'=\frac{-\lambda}{a(\|\phi_j'\|^2)}f'(\phi_j)v
$$
Hence, their Wronskian $W(w,v)$ is constant and is given by  
$$
W(w,v)=w'(x)v(x)-w(x)v'(x)=\phi_j'(0)>0
$$ 
and for $x=x_0$ we have
$$
w'(x_0)v(x_0)-w(x_0)v'(x_0)=w'(x_0)\phi_j'(x_0)-w(x_0)\phi_j''(x_0)>0\Rightarrow w(x_0)< 0.
$$

From the above considerations and using the results in \cite[Page 122]{HE}, the first eigenvalue $\gamma_1^j$ of $L^{\phi_j}_{0}$ is positive.
From the previous theorem, we have that $\gamma_1^j$ is also an eigenvalue of $L^{\phi_j}_{\varepsilon_j}$. It follows that $\gamma_1^j>0$ which implies that $\phi_j$ is unstable.
\end{proof}

We also have that
\begin{theorem}
The equilibria $\phi_j$ are unstable, for $j\geq 3$ \emph{odd}. 
\end{theorem}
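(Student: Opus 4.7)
The strategy is parallel to the preceding theorem (even $j$), but now aimed at the \emph{second} eigenvalue $\gamma_2^j$ of $L_0^{\phi_j}$. By Lemma~\ref{theo:eigenvalues}(ii), when $j$ is odd the eigenvalue $\gamma_{2k}^j$ of $L_0^{\phi_j}$ is also an eigenvalue of $L_{\varepsilon_j}^{\phi_j}$ for every $k\geq 1$, so it suffices to prove $\gamma_2^j>0$: this furnishes a positive eigenvalue of the linearization, giving instability of $\phi_j$ for \eqref{eq_nl_changed}, and the time-change argument used in the previous subsections then transfers the instability to \eqref{eq_non-local}.

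To locate $\gamma_2^j$, I would use the Sturm--Liouville shooting characterization (\cite[p.~122]{HE}): letting $w$ solve $w''+\tfrac{\lambda f'(\phi_j)}{a(\|\phi_j'\|^2)}w=0$ with $w(0)=0$, $w'(0)=1$, the number of zeros of $w$ in $(0,\pi)$ equals the number of strictly positive eigenvalues of $L_0^{\phi_j}$. Hence it is enough to produce at least two zeros of $w$ in $(0,\pi)$. The auxiliary solution $v:=\phi_j'$ satisfies the same linear ODE, as already verified in the previous proof, so the Wronskian $W(w,v)\equiv \phi_j'(0)>0$ is constant and, at every zero $y$ of $v$, forces $w(y)\,v'(y)<0$.

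I would then exploit the oscillatory structure of $\phi_j$: it has $j+1$ zeros $0=x_0<x_1<\cdots<x_j=\pi$ with $\phi_j$ alternating sign across them, and between consecutive zeros a critical point $y_k\in(x_k,x_{k+1})$. From $\phi_j''=-\lambda f(\phi_j)/a(\|\phi_j'\|^2)$ and the shape of $f$ imposed by \eqref{eq_prop_f} (so that the extremal values of $\phi_j$ lie in the region where $f(u)$ has the same sign as $u$), one gets $v'(y_k)=\phi_j''(y_k)<0$ at a local maximum of $\phi_j$ and $v'(y_k)>0$ at a local minimum. Combined with $w(y_k)\,v'(y_k)<0$, the signs of $w(y_k)$ must alternate in $k$. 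For odd $j\geq 3$ there are at least three such critical points $y_0<y_1<y_2$, yielding $w(y_0)>0$, $w(y_1)<0$, $w(y_2)>0$, and hence at least two zeros of $w$ in $(y_0,y_2)\subset(0,\pi)$.

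The step I expect to be most delicate is not the Wronskian--shooting mechanism itself, but justifying that at every extremum $y_k$ the value $f(\phi_j(y_k))$ genuinely has the sign of $\phi_j(y_k)$, i.e.\ that the amplitude of the equilibrium stays in the range where $uf(u)>0$. This should follow from the energy identity $\tfrac{\bar a}{2}(\phi_j')^2+\lambda F(\phi_j)\equiv\text{const}$ (with $\bar a=a(\|\phi_j'\|^2)$ and $F'=f$) together with the shape of the potential $F$ dictated by \eqref{eq_prop_f}, forcing the turning values of $\phi_j$ to lie strictly below the first positive zero of $f$. Once that is settled, concluding $\gamma_2^j>0$, invoking Lemma~\ref{theo:eigenvalues}(ii), and transferring instability to \eqref{eq_non-local} via the time-change as in the earlier subsections completes the argument.
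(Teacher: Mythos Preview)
Your proposal is correct and very close in spirit to the paper's proof: both use the Wronskian identity between the shooting function $w$ and $v=\phi_j'$, evaluated at critical points of $\phi_j$, to force sign changes of $w$ and hence positivity of the target eigenvalue, after which Lemma~\ref{theo:eigenvalues}(ii) transfers $\gamma_2^j>0$ to the nonlocal operator. The one genuine difference is tactical: the paper exploits the antisymmetry $u_2(\pi-x)=-u_2(x)$ to identify $\gamma_2^j$ with the \emph{first} eigenvalue of the restricted operator $D_j$ on $[0,\tfrac{\pi}{2}]$, and then needs only a \emph{single} sign change of $w$ in $(0,\tfrac{\pi}{2})$ (obtained at $\tfrac{3\pi}{2j}$). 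You instead work on the full interval, invoke the oscillation count directly, and produce \emph{two} zeros of $w$ in $(0,\pi)$ by reading off the signs of $w$ at three consecutive critical points $y_0,y_1,y_2$. Your route avoids the half-interval reduction at the cost of appealing to the full Sturm oscillation statement (number of zeros of $w$ equals number of positive eigenvalues), which is unproblematic here since $0\notin\sigma(L_0^{\phi_j})$ by Lemma~\ref{Lemma-Henry}(iii).

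Your flagged ``delicate step'' -- that $f(\phi_j(y_k))$ has the sign of $\phi_j(y_k)$ at each extremum -- is indeed needed (the paper uses it implicitly as well), and your proposed energy argument works. An even quicker justification: if $\phi_j'(y)=0$ and $f(\phi_j(y))=0$, then by ODE uniqueness $\phi_j$ would be the constant solution $\phi_j\equiv\phi_j(y)$, contradicting $\phi_j(0)=0$; hence $\phi_j''(y)=-\lambda f(\phi_j(y))/\bar a\neq 0$, and the sign is forced by whether $y$ is a max or a min.
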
	
\begin{proof}
	The idea here is to analyze the problem $D_j: H^2(0,\frac{\pi}{2})\cap H^1_0(0,\frac{\pi}{2})\to L^2(0,\frac{\pi}{2})$ defined by
$$
D_ju=u''+\frac{\lambda f'(\phi_j)}{a(\|\phi_j\|^2)}u.
$$
Now, observe that if $\gamma_2^j$ is the second eigenvalue of $L^{\phi_j}_{0}$ with associated eigenfunction $u_2$ then $\gamma_2^j$ is the first eigenvalue of $D_j$ and its eigenfunction is given by $u_2\big|_{[0,\frac{\pi}{2}]}.$
	
Now we will use the same idea as before to assure that $\gamma_2^j$ is positive. Remember that $\phi_j(x) >0$ in $(0,\tfrac{\pi}{j})$ and we have that 
$$
\phi_j(\tfrac{2\pi}{j} - x)=-\phi_j(x), \ x\in [0,\tfrac{2\pi}{j}], \quad \phi_j(0)=\phi_j(\pi)=0, \quad (\phi_j)'(0)>0.
$$	
Also, observe that $(\phi_j)'(\frac{3\pi}{2j})=0$ and  $(\phi_j)''(\frac{3\pi}{2j})=-\lambda f(\phi_j(\frac{3\pi}{2j}))>0$.

Consider the unique function $w\in H^2(0,\pi)$ that satisfies  
$$
\left\{\begin{aligned}
&w'' +\frac{\lambda f'(\phi_j)}{a(\|\phi_j'\|^2)}w =0,\\
&w(0)=0 \ w'(0)=1.
\end{aligned}\right.
$$
	
Observe that $w$ and $\phi'$ are both solutions of the same equation and then the Wronskian determined by the solutions must be constant
$$
(\phi_j)'(x)w'(x)-(\phi_j)''(x)w(x) =(\phi_j)'(0)>0.
$$	
Taking $x=\frac{3\pi}{2j},$ we have $(\phi_j)'(x)w'(x)-(\phi_j)''(x)w(x)=-(\phi_j)''(\frac{3\pi}{2j})w(\frac{3\pi}{2j})>0.$

Then,  $w(\frac{3\pi}{2j})<0$ and $\frac{3\pi}{2j}\leq \frac{\pi}{2}.$ Then, we have there exists at least one $x^* <\frac{\pi}{2}$ in such way that $w(x^*)=0,$ thus, $\gamma_2^j >0.$
	
The instability of $\phi_j$ for now follows by Lemma \ref{theo:eigenvalues}. 
\end{proof}

The results proved in this section ensure that, for any sign changing equilibria $\phi$, there exists a $\delta_0>0$ and, for each $\delta<\delta_0$, a $0<\delta'<\delta$ and a global solution of \eqref{eq_nl_changed} $\eta:\R \to H^1_0(0,\pi)$ such that
\begin{equation*}
\begin{split}
&u_0\in H^1_0(0,\pi),\  \|u_0-\phi\|_{H^1_0(0,\pi)}<\delta',\\
&\eta(0)=u_0\ \hbox{and}\ \|\eta(\tau)-\phi\|_{H^1_0(0,\pi)}\leq \delta, \hbox{ for all } \tau\leq 0, \\
&\|\eta(\tau)-\phi\|_{H^1_0(0,\pi)}\leq K e^{\beta \tau}\|u_0 - \phi\|_{H^1_0(0,\pi)}, \hbox{ for all } \tau \leq 0.
\end{split}
\end{equation*}
as before, making the change in the time-variable $t=\int_0^\tau a(\|\eta_x(\cdot,\theta)\|^2)^{-1}d\theta$, $\xi(t)=\eta(\tau)$ is a global solution of \eqref{eq_nl_changed} and, in this case, $\tfrac{\tau}{m} \leq t \leq \tfrac{\tau}{M}$. Therefore, for all $t \leq 0$,
$$
\|\xi(t)-\phi\|_{H^1_0(0,\pi)}\leq \|\eta(\tau)-\phi\|_{H^1_0(0,\pi)}\leq Ke^{{\beta}\tau}\|u_0-\phi\|_{H^1_0(0,\pi)}\leq Ke^{{\beta} mt}\|u_0-\phi\|_{H^1_0(0,\pi)}.
$$

\section{Hyperbolicity of the sign changing equilibria}

Recall that, as seen in the beginning of Section \ref{S2}, the hyperbolicity of the equilibria will follow if we prove that $0\notin \sigma(L_\varepsilon^\phi)$, for all equilibrium $\phi$ of \eqref{eq_non-local}, where $L_\varepsilon^\phi$ is given by \eqref{eq:linearization}. We already know, from Lemma \ref{Lemma-Henry}, that $0\notin \sigma(L_0^\phi)$.

\subsection{Hyperbolicity of $\phi_2$}

As we have seen in Section \ref{S2}, the linearization around $\phi_2$ of $w_t=w_{xx}+\frac{\lambda f(w)}{a(\|w_x\|^2)}$ is given by 
$$
L^{\phi_2}_{\varepsilon_2}u=u''+\frac{\lambda f'(\phi_2)u}{a(\|\phi_2'\|^2)}-\frac{2\lambda^2 a'(\|\phi_2'\|^2)}{a(\|\phi_2'\|^2)^3}f(\phi_2)\int_0^\pi f(\phi_2(s))u(s)ds.
$$

We want to prove that $0$ is not an eigenvalue of $L^{\phi_2}_{\varepsilon_2}$. By contradiction, 
assume that $0$ is an eigenvalue of $L^{\phi_2}_{\varepsilon_2}$. Hence, there exists a $0\neq v \in H^2(0,\pi)\cap H^1_0(0,\pi)$ such that $L^{\phi_2}_{\varepsilon_2}v=0$. By Theorem \ref{DD}, $0$ is a simple eigenvalue of $L^{\phi_2}_{\varepsilon_2}$ and, consequently, either $v(\pi-x)=v(x)$, for all $x \in [0,\pi]$, or $v(\pi -x)=-v(x)$, for all $x \in [0,\pi]$. Since $\int_0^\pi f(\phi_2)v\neq 0$, the second alternative holds. In particular, $v(\frac{\pi}{2})=0$. Using the symmetry properties verified above,
$$
\int_0^\pi f(\phi_2(s))v(s)ds=2\int_0^\frac{\pi}{2} f(\phi_2(s))v(s) ds.
$$

For $\varepsilon \in \mathbb{R}$, consider the operator $M_{\varepsilon}: D(M_{\varepsilon})\subset H^1_0(0,\frac{\pi}{2})\to H^1_0(0,\frac{\pi}{2})$ defined by $D(M_{\varepsilon})=H^2(0,\frac{\pi}{2})\cap H^1_0(0,\frac{\pi}{2})$ and
$$
M_{\varepsilon} u=u''+\frac{\lambda f'(\phi_2)u}{a(\|\phi_2'\|^2)}+\varepsilon f(\phi_2)\int_0^\frac{\pi}{2} f(\phi_2(s))u(s)ds, \ \forall u\in D(M_\epsilon).
$$

Now, for $u=v|_{[0,\frac{\pi}{2}]}$ we have $u\in D(M_{\varepsilon_2})$ and $M_{\varepsilon_2}u=0$, where $\varepsilon_2=-\frac{\lambda^2 4a'(\|\phi_2'\|^2)}{a(\|\phi_2'\|^2)^3}$ and then  $0 \in \sigma(M_{\varepsilon_2})$. By Theorem \ref{DD} and the fact that $\varepsilon_2\leq 0$, this would imply that there exists $\gamma \geq 0$ such that $\gamma$ is an eigenvalue for $M_0.$ 

But, this leads to a contradiction, since $M_0$ corresponds to the linearization of the semilinear problem
\begin{equation}\label{eq_sl_half}
\left\{
\begin{aligned}
& u_t = u_{xx} + \frac{\lambda f(u)}{a(\| \phi_2'\|^2)} ,  \, x\in (0, \tfrac{\pi}{2}),\, t>0,\\
& u(0, t)=u(\tfrac{\pi}{2}, t)=0,  \ \  t\geq 0,\\
& u(\cdot,0)=u_0(\cdot)\in H^1_0(0,\tfrac{\pi}{2}).
\end{aligned}
\right.
\end{equation}
around its positive equilibrium $\psi_1=\phi_2\big|_{[0,\pi]}$.

This concludes the proof that $\phi_2$ is hyperbolic.

\bigskip

\subsection{Hyperbolicity of $\phi_j$ for $j$ odd.}

Again, we wish to prove that $0\notin \sigma(L_{\varepsilon_j}^{\phi_j})$, where $L_{\varepsilon_j}^{\phi_j}$ is given by \eqref{eq:linearization}, with $\phi$ replaced by $\phi_j$ and $\varepsilon_j=-\frac{2\lambda^2 a'(\|\phi_j'\|^2)}{a(\|\phi_j'\|^2)^3}$. By contradiction, assume that we can find $0\neq u \in H^1_0(0,\pi)\cap H^2(0,\pi)$ such that $L^{\phi_j}_{\varepsilon_j}u=0$. The simplicity of the zero eigenvalue (see Theorem \ref{DD}), implies that $u(x)=u(\pi-x)$, for all $x \in [0,\pi].$

\bigskip 

There are two possible cases: Either $u(\frac{\pi}{j})=0$ or $u(\frac{\pi}{j})\neq 0$.

\bigskip

\textbf{Case $u(\frac{\pi}{j})=0$:}
In this case, $u$ will have the same symmetries that $\phi_j$ has. In fact, define 
\[u_1(x)=\begin{cases} u(\frac{\pi}{j}-x), & \ x\in [0,\frac{\pi}{j}] \\
-u(x-\frac{\pi}{j}), & \ x \in [\frac{\pi}{j},\pi]
\end{cases}\]

Observe that $L^{\phi_j}_{\varepsilon_j}u_1=0$ and, using the simplicity, we have that $u_1=\pm u$. Let us show that $u_1=-u$ is not possible. If that was the case $u(x)=-u(\frac{\pi}{j}-x)$ for $x\in [0,\frac{\pi}{j}]$ and $u(x)=u(\frac{\pi}{j}+x)$, $x\in [0, \pi-\frac{\pi}{j}]$. This would lead to
$$
\int_0^\pi f(\phi_j(s))u(s)ds=0
$$
and that $0$ is an eigenvalue of $L_0^\phi$ which is a contradiction. 

Hence $u=u_1$ and $u(x)=u(\frac{\pi}{j}-x)$ for all $x \in [0,\frac{\pi}{j}]$ and $u(x)=-u(x-\frac{\pi}{j})$, $x\in [\frac{\pi}{j},\pi]$.

Therefore, $u$ has the same symmetry as $\phi_j$. Because of that, we can guarantee $v=u|_{[0,\frac{\pi}{j}]}$ satisfies 
$$
v''+\frac{\lambda f'(\phi_j)}{a(\|\phi_j'\|^2)}v+j\varepsilon_j f(\phi_j)\int_0^{\frac{\pi}{j}} f(\phi_j(s))v(s)ds=0 
$$
and we can apply the same reasoning employed in the case $j=2$ to arrive at a contradiction.

\bigskip

\textbf{Case $u(\frac{\pi}{j})\neq 0$:} We define the following auxiliary functions
\[u_1(x)=\begin{cases} u(x+\tfrac{\pi}{j}), & \ x\in [0,\frac{(j-1)\pi}{j}] \\
-u(x-\tfrac{(j-1)\pi}{j}), & \ x \in [\tfrac{(j-1)\pi}{j},\pi]
\end{cases}\]
and
$u_2(x)=u_1(\pi-x)$ for all $x \in [0,\pi]$.

We know that $\int_0^{\pi} f(\phi_j(s))u(s)ds=-\int_0^{\pi} f(\phi_j(s))u_1(s)=-\int_0^{\pi} f(\phi_j)u_2(s)ds$. We observe that, despite $u_1, u_2 \notin H^1_0(0,\pi)$ we have  $L^{\phi_j}_{\varepsilon_j}u_1=L^{\phi_j}_{\varepsilon_j}u_2=0$ (here we use the extension of $L^{\phi_j}_{\varepsilon_j}$ to $H^2(0,\pi)$).

Next we use the construction done in \cite{Catchpole} to obtain solutions of the local operator $L_0^{\phi_j}$ using linearly independent solutions of the operator $L_{\varepsilon_j}^{\phi_j}$.

Then, $v_1(x)=u(x)+u_1(x)$ and $v_2(x)=u(x)+u_2(x)$ satisfy,
\begin{equation}\label{eq-aux2}
v''+\frac{\lambda f'(\phi_j)}{a(\|\phi_j'\|^2)}v=0.
\end{equation}
Let us prove that $\{v_1,v_2\}$ defines a fundamental set of solutions for \eqref{eq-aux2}. In fact, if $v_1=\alpha v_2$, then $0\neq v_1(0)=u(\frac{\pi}{j})=\alpha v_2(0)=-\alpha u(\frac{\pi}{j})$. Hence $\alpha=-1$ and $2u+u_1+u_2\equiv 0$. Computing this identity at $k\frac{\pi}{j}$ for $k=1,2,\cdots,j-1$ we obtain the formula
\begin{equation}\label{eq-aux3}
u\left(\frac{(k-1)\pi}{j}\right)+2u\left(\frac{k\pi}{j}\right)+u\left(\frac{(k+1)\pi}{j}\right), \ k=1,2,\cdots,j-1.
\end{equation}
which leads to  $\mathcal{L}\,\mathcal{U}=0$, where   
$$
\mathcal{L} =
\begin{bmatrix}
2               & 1        & 0     & \hdots   &\hdots&0   	& 0     & 0      \\
1 		     & 2     & 1       & \hdots   &\hdots&0     & 0     & 0         \\
0		         & 1     & 2     &\hdots   &\hdots&0     & 0     & 0        \\
 \vdots        & \vdots& \vdots&\ddots   &\ddots&\vdots& \vdots& \vdots\\
 \vdots        & \vdots& \vdots& \ddots   &\ddots&\vdots& \vdots& \vdots\\
0     & 0     & 0       &\hdots   &\hdots& 2     &1        & 0    \\
0     & 0     & 0        &\hdots   &\hdots& 1     & 2     &1       \\
0       & 0     & 0       &\hdots   &\hdots& 0     & 1     &2       \\
\end{bmatrix}
\quad \hbox{and}\quad \mathcal{U}=
\begin{bmatrix}
u(\tfrac{\pi}{j}) \\\vspace{-1pt}
u(\tfrac{2\pi}{j})\\\vspace{-1pt}
u(\tfrac{3\pi}{j}) \\ \vspace{-4pt}
\vdots \\ \vspace{-1pt}
\vdots\\\vspace{-1pt}
u(\tfrac{(j-3)\pi}{j}) \\\vspace{-1pt}
u(\tfrac{(j-2)\pi}{j})\\\vspace{-1pt}
u(\tfrac{(j-1)\pi}{j})
\end{bmatrix},
$$
consequently, since ${\rm det} (\mathcal{L})=j\neq 0$, we have that $u(k\frac{\pi}{j})=0$, $k=1,2,\cdots,j-1$, contradicting the assumption that $u(\frac{\pi}{j})\neq 0$. This concludes the proof that $\{v_1,v_2\}$ defines a fundamental set of solutions for \eqref{eq-aux2}.


Since $(\phi_j)'$ also satisfies \eqref{eq-aux2}, there are real numbers $\alpha$ and $\beta$ such that $(\phi_j)'=\alpha v_1+\beta v_2$.

Recall that,
\begin{equation}\label{eq:sign.deriv}
(\phi_j)'(0)=(-1)^k(\phi_j)'(\tfrac{k\pi}{j}) \mbox{ for all } k \in \{1, \dots, j\}.
\end{equation}

Observe that, for all $x \in [0,\pi]$, 
$$
\begin{aligned}
&(\phi_j)'(\pi-x)=-(\phi_j)'(x)\ \hbox { and } \\
&v_1(\pi-x)=u(\pi-x)+u_1(\pi-x)=u(x)+u_2(x)=v_2(x).
\end{aligned}
$$
Therefore,
$\alpha v_1(x)+\beta v_2(x)=
-\alpha v_2(x)-\beta v_1(x)$, that is, $(\alpha+\beta) (v_1(x)+v_2(x))=0$, for all $x \in [0,\pi]$. Since $v_1+v_2\neq 0$, we conclude that $\beta=-\alpha$. Hence,
$$
(\phi_j)'=\alpha v_1+\beta v_2=\alpha (u_1-u_2).
$$

Now,\vspace{-5pt}
\begin{equation}\label{eq:derivadas}
\begin{aligned}
(\phi_j)'(0)&=
2\alpha u(\tfrac{\pi}{j})\\
(\phi_j)'(\tfrac{k\pi}{j})&=
\alpha u(\tfrac{(k+1)\pi}{j})-\alpha u(\tfrac{(k-1)\pi}{j}) \quad (1\leq k\leq j-1)
\end{aligned}
\end{equation}

Then, from \eqref{eq:sign.deriv} and \eqref{eq:derivadas}, noting that
$\alpha\neq 0$ (otherwise, $(\phi_j)'=0$), we have
\begin{equation}\label{eq:relation}
2(-1)^ku\left(\frac{\pi}{j}\right)-u\left(\frac{(k+1)\pi}{j}\right)+u\left(\frac{(k-1)\pi}{j}\right)=0, \ k=1,\cdots,j-1.
\end{equation}

\medskip

Next we consider, separately, the cases: $j=3$, $j=5$ and $j\geq 7$.

\medskip

For $j\!=\!3$, we can apply \eqref{eq:relation} for $k=1$ and, since $u(\tfrac{\pi}{3})=u(\tfrac{2\pi}{3})$, we have that $u(\tfrac{\pi}{3})=0$, which is a contraction. For $j=5$, using \eqref{eq:relation}, with $k=1,2$, we obtain
\begin{equation*}\begin{aligned}
&-2u(\tfrac{\pi}{5})-u(\tfrac{2\pi}{5})=0\\
&2u(\tfrac{\pi}{5})-u(\tfrac{3\pi}{5})+u(\tfrac{\pi}{5})=0.
\end{aligned}
\end{equation*}
Now, since  $u(\frac{3\pi}{5})=u(\frac{2\pi}{5})$, the above equations can be written as
$$ 
\begin{bmatrix}
-2 & -1\\
3 & -1
\end{bmatrix}
\begin{bmatrix}
u(\tfrac{\pi}{5})\\
u(\tfrac{2\pi}{5})
\end{bmatrix}
=\left[
\begin{array}{c}
0\\ 0
\end{array}
\right]
$$

Therefore, $u\left(\tfrac{\pi}{5}\right)=u\left(\!\tfrac{2\pi}{5}\!\right)=0$. This contradicts our assumption that $u(\tfrac{\pi}{5})\neq 0$.

For values of $j=2n+1$, $n\geq 3$, using \eqref{eq:relation} and the fact that $u(x)=u(\pi-x)$, for all $x\in [0,\pi]$, we obtain
\begin{equation}\label{eq:gen.matrix}
\left[
\begin{matrix}
-2               \hspace{-12pt} &-1        &\ \ 0     &\ \ 0     &\hdots   &\hdots&0   	&\ \ 0     &\ \ 0     &\ \ 0    \quad \\
\ \ 3 		    \hspace{-12pt} &\ \ 0     & -1       &\ \ 0     &\hdots   &\hdots&0     &\ \ 0     &\ \ 0     &\ \ 0    \quad \\
-2 		        \hspace{-12pt} &\ \ 1     &\ \ 0     &-1        &\hdots   &\hdots&0     &\ \ 0     &\ \ 0     &\ \ 0    \quad \\
\ \ 2 		    \hspace{-12pt} &\ \ 0     &\ \ 1     &\ \ 0     &\hdots   &\hdots&0     &\ \ 0     &\ \ 0     &\ \ 0    \quad \\
\ \ \vdots       \hspace{-12pt} &\ \ \vdots&\ \ \vdots&\ \ \vdots&\ddots   &\ddots&\vdots&\ \ \vdots&\ \ \vdots&\ \ \vdots\quad\\
\ \ \vdots       \hspace{-12pt} &\ \ \vdots&\ \ \vdots&\ \ \vdots&\ddots   &\ddots&\vdots&\ \ \vdots&\ \ \vdots&\ \ \vdots\quad\\
2(\!-1)^{n-3}    \hspace{-12pt} &\ \ 0     &\ \ 0     &\ \ 0     &\hdots   &\hdots&0     &-1        &\ \ 0     &\ \ 0    \quad \\
2(\!-1)^{n-2}    \hspace{-12pt} &\ \ 0     &\ \ 0     &\ \ 0     &\hdots   &\hdots&1     &\ \ 0     &-1        &\ \ 0    \quad \\
2(\!-1)^{n-1}    \hspace{-12pt} &\ \ 0     &\ \ 0     &\ \ 0     &\hdots   &\hdots&0     &\ \ 1     &\ \ 0     &-1       \quad \\
2(\!-1)^{n}      \hspace{-12pt} &\ \ 0     &\ \ 0     &\ \ 0     &\hdots   &\hdots&0     &\ \ 0     &\ \ 1     &-1       \quad \\
\end{matrix}\!\!\!\!\!
\right]\!\!\!
\begin{bmatrix}
u(\tfrac{\pi}{j}) \\
u(\tfrac{2\pi}{j})\\
u(\tfrac{3\pi}{j}) \\
u(\tfrac{4\pi}{j}) \\
\vdots \\
\vdots\\
u(\tfrac{(n-2)\pi}{j}) \\
u(\tfrac{(n-1)\pi}{j})\\
u(\tfrac{n\pi}{j})
\end{bmatrix}
\!\!=\!\!
\left[\begin{array}{c}
0 \\
0\\
0 \\
0 \\
\vdots \\
\vdots\\
0 \\
0\\
0\\
0
\end{array}\right]
\end{equation}

This $n\times n$ matrix defines a relation between the values of $u(\tfrac{k\pi}{j})$ for $k=1,\cdots, n$. Next we prove that this matrix is non-singular.

\begin{lemma}\label{lemma:determinante}
	Consider the matrix $A_2=\left[\begin{smallmatrix}
	-2 & -1\\
	3 & -1
	\end{smallmatrix}\right]$ and, for $n \in \mathbb{N}$, define $A_n$ as the $n\times n$ matrix give in \eqref{eq:gen.matrix}.  Then, for all $n\geq 2$, ${\rm det}(A_n)=(2n+1)(-1)^n$. 
\end{lemma}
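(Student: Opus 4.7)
The plan is to avoid a direct calculation of $\det(A_n)$ and instead apply Cramer's rule to a cleverly chosen vector. Define $w=(w_1,\ldots,w_n)\in\mathbb{R}^n$ by $w_k=(-1)^{k+1}k$. The central claim is the identity
\[
A_n w \;=\; (-1)^n(2n+1)\,e_n,
\]
where $e_n$ is the $n$-th standard basis vector of $\mathbb{R}^n$. This identity is no accident: the first $n-1$ rows of $A_n$ encode the three-term recurrence $v_{k+1}=v_{k-1}+2(-1)^kv_1$ with $v_0=0$ coming from \eqref{eq:relation}, whose unique solution parametrized by $v_1$ is precisely $v_k=(-1)^{k+1}k\,v_1$. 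Feeding this solution into the last row --- which incorporates the symmetry constraint $v_{n+1}=v_n$ (so that $-v_{n+1}$ becomes $-v_n$ in row $n$) --- produces exactly the residual $(-1)^n(2n+1)$.

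The identity $A_n w = (-1)^n(2n+1)\,e_n$ is verified row by row: the only subtleties are rows $1$ and $2$ (where the boundary $v_0=0$ perturbs the generic form of the recurrence and shifts the $(2,1)$-entry from $2$ to $3$) and row $n$ (where the symmetry contributes the extra $v_1$-term). Once this identity is in hand, Cramer's rule combined with $w_1=1$ immediately gives
\[
\det(A_n) \;=\; \det\bigl(\tilde A_n\bigr),
\]
where $\tilde A_n$ is obtained from $A_n$ by replacing its first column by $(-1)^n(2n+1)\,e_n$.

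Expanding $\det(\tilde A_n)$ along its first column, whose only nonzero entry is $(-1)^n(2n+1)$ in row $n$, yields $\det(\tilde A_n)=(-1)^{n+1}(-1)^n(2n+1)\det(N)=-(2n+1)\det(N)$, where $N$ is the $(n-1)\times(n-1)$ minor obtained by deleting row $n$ and column $1$ from $A_n$. Direct inspection of \eqref{eq:gen.matrix} shows that, after column $1$ is removed, row $k$ (for $1\le k\le n-1$) is supported only at column $k-2$ (entry $+1$, when $k\ge 3$) and column $k$ (entry $-1$); hence $N$ is lower-triangular with every diagonal entry equal to $-1$, and $\det(N)=(-1)^{n-1}$. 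Combining,
\[
\det(A_n) \;=\; -(2n+1)(-1)^{n-1} \;=\; (-1)^n(2n+1),
\]
as asserted. The main obstacle is recognizing the right vector $w$ in the first place; once the parallel with the recurrence \eqref{eq:relation} is spotted, every remaining step is a routine algebraic verification, and the factor $2n+1$ emerges naturally from the clash between the closed form $v_k=(-1)^{k+1}k\,v_1$ and the symmetry $v_{n+1}=v_n$.
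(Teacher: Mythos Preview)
Your argument is correct and genuinely different from the paper's. The paper proceeds by induction on $n$: a column operation (adding the last column of $A_{n+1}$ to the second-to-last) produces an auxiliary matrix $B_{n+1}$ with the same determinant, and Laplace expansion along its last row yields the recursion $\det(A_{n+1})=-\det(A_n)-2\det(T_n)$, where $T_n$ is lower triangular with diagonal $-1$; the closed form then follows from $\det(A_2)=5$.

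Your route avoids induction entirely. By recognizing that the first $n-1$ rows of $A_n$ encode the recurrence \eqref{eq:relation} (whose solution with $v_0=0$ is $v_k=(-1)^{k+1}k\,v_1$) and that the last row carries the symmetry $v_{n+1}=v_n$, you exhibit an explicit vector $w$ with $A_n w=(-1)^n(2n+1)e_n$, and then the Cramer identity $w_1\det(A_n)=\det(\tilde A_n)$ reduces the problem to a single lower-triangular minor. This is more conceptual: the factor $2n+1$ is seen to arise precisely from the incompatibility between the closed-form solution of the recurrence and the boundary symmetry, which is exactly the mechanism driving the hyperbolicity proof in the surrounding text. One small remark worth making explicit for the reader is that the Cramer identity $x_j\det(A)=\det(A^{(j)})$ holds for \emph{any} square matrix by multilinearity, so you are not assuming invertibility of $A_n$ in advance; otherwise the argument would be circular. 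With that caveat noted, every step checks out, including the row-by-row verification of $A_n w$ and the structure of the minor $N$.
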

\begin{proof} The proof is done by induction. Note that, ${\rm det}(A_2)=5$. Assume that ${\rm det}(A_n)=(-1)^n(2n+1)$ and let us prove that ${\rm det}(A_{n+1})=(-1)^{n+1}(2n+3)$.
	
	Consider the auxiliary matrix
$$
B_{n+1}=\left[
\begin{smallmatrix} \\
-2               \hspace{-6pt} &-1        &\ \ 0     &\ \ 0     &\hdots   &\hdots&0   	&\ \ 0     &\ \ 0     &\ \ 0    \ \\
\ \ 3 		    \hspace{-6pt} &\ \ 0     & -1       &\ \ 0     &\hdots   &\hdots&0     &\ \ 0     &\ \ 0     &\ \ 0    \ \\
-2 		        \hspace{-6pt} &\ \ 1     &\ \ 0     &-1        &\hdots   &\hdots&0     &\ \ 0     &\ \ 0     &\ \ 0    \ \\
\ \ 2 		    \hspace{-6pt} &\ \ 0     &\ \ 1     &\ \ 0     &\hdots   &\hdots&0     &\ \ 0     &\ \ 0     &\ \ 0    \ \\
\ \ \vdots       \hspace{-6pt} &\ \ \vdots&\ \ \vdots&\ \ \vdots&\ddots   &\ddots&\vdots&\ \ \vdots&\ \ \vdots&\ \ \vdots\ \\
\ \ \vdots       \hspace{-6pt} &\ \ \vdots&\ \ \vdots&\ \ \vdots&\ddots   &\ddots&\vdots&\ \ \vdots&\ \ \vdots&\ \ \vdots\ \\
2(\!-1)^{n-3}    \hspace{-6pt} &\ \ 0     &\ \ 0     &\ \ 0     &\hdots   &\hdots&0     &-1        &\ \ 0     &\ \ 0    \ \\
2(\!-1)^{n-2}    \hspace{-6pt} &\ \ 0     &\ \ 0     &\ \ 0     &\hdots   &\hdots&1     &\ \ 0     &-1        &\ \ 0    \ \\
2(\!-1)^{n-1}    \hspace{-6pt} &\ \ 0     &\ \ 0     &\ \ 0     &\hdots   &\hdots&0     &\ \ 1     &\ \ -1     &-1       \ \\
2(\!-1)^{n}      \hspace{-6pt} &\ \ 0     &\ \ 0     &\ \ 0     &\hdots   &\hdots&0     &\ \ 0     &\ \ 0     &-1       \ \\
\end{smallmatrix}\right].
$$
	
Note that, ${\rm det}(A_{n+1})={\rm det}(B_{n+1})$ since $B_{n+1}$ can be obtained from $A_{n+1}$ (by adding the last column to the second to the last column of $A_{n+1}$). Now, $B_{n+1}$ can be written as
$$
B_{n+1}=\left[\begin{array}{c|r}\scriptstyle &\scriptstyle 0\\
\scriptstyle& \scriptstyle 0 \\
A_n &\scriptstyle \vdots\\
&\scriptstyle 0 \\
&\scriptstyle -1\\ \hline
\scriptstyle 2(-1)^{n+1} \ 0 \ 0 \ 0 \ \hdots \ 0 \ 0  \ 0 &\scriptstyle -1
\end{array}
\right]
\qquad
B_{n+1}=\left[\begin{array}{l|c}
\scriptstyle -2&   \\
\scriptstyle3&  \\
\scriptstyle-2& T_n  \\ 
\scriptstyle \vdots & \\
\scriptstyle \!2(\!-1\!)^{n}\! &\\
\hline
\scriptstyle \!2(\!-1\!)^{n+1}\! &\scriptstyle 0\ 0 \ 0 \  \dots \ 0 \ 0\  -1
\end{array}
\right]
$$
where $T_n$ is a triangular matrix of order $n$ with $-1$ in the diagonals entries.

Hence, using the Laplace expansion related to the last line, we arrive at
$$
{\rm det}(B_{n+1}) =-{\rm det}(A_n) -2{\rm det}(T_n).
$$

Now, using the induction hypothesis and making some calculations, we conclude that ${\rm det}(A_{n+1}) ={\rm det}(B_{n+1}) =(-1)^{n+1}[2(n+1)+1]$
as desired.
\end{proof}

From Lemma \ref{lemma:determinante}, $A_n$ is non-singular and, consequently, $u(\tfrac{\pi}{j})=0$, which is a contradiction. We conclude that there does not exist a non-zero function $u$ satisfying $L^{\phi_j}_{\varepsilon_j}u=0$. Hence $0 \notin \sigma(L^{\phi_j}_{\varepsilon_j})$ for all $j=2n+1$ for $n \in \mathbb{N}$. Therefore, $\phi_j$ is hyperbolic.

\subsection{Hyperbolicity of $\phi_j$ for $j$ even} Suppose that, there exists $0\neq u\in H^1(0,\pi)\cap H^1_0(0,\pi)$ such that $L^{\phi_j}_{\varepsilon_j} u=0$.

Since $j$ is even, there are non-negative integers $n\geq 1$ and $k\geq 0$ such that $j=2^n(2k+1).$ 

\begin{lemma}\label{lemma:sym-u}
If $u$ is an eigenfunction associated to the eigenvalue $0$ of $L^{\phi_j}_{\varepsilon_j}$, the following holds:
$$
u(\tfrac{\pi}{2^i}-x)=-u(x),\  x \in \left[0,\tfrac{\pi}{2^i}\right], \quad 1\leq i < n.
$$	
\end{lemma}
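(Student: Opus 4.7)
My plan is induction on $i$, combining the simplicity of the zero eigenvalue of $L^{\phi_j}_{\varepsilon_j}$ (from Theorem~\ref{DD} together with Lemma~\ref{Lemma-Henry}(iii)) with the reflection symmetries of $\phi_j$ about its zeros $\{k\pi/j:k=0,\ldots,j\}$.

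\textbf{Preliminary step} (``$i=0$''). Since $j$ is even, $\phi_j(\pi-x)=-\phi_j(x)$, and together with $f$ odd and $f'$ even this gives $p(\pi-x)=p(x)$ and $f(\phi_j(\pi-x))=-f(\phi_j(x))$, where $p:=\lambda f'(\phi_j)/a(\|\phi_j'\|^2)$. A direct computation mirroring the $\phi_2$ argument of Section~4 shows that $\tilde u(x):=u(\pi-x)$ also lies in $\ker L^{\phi_j}_{\varepsilon_j}$; simplicity forces $\tilde u=\pm u$, and the $+$ case would give $\int_0^\pi f(\phi_j)u=0$, reducing $L^{\phi_j}_{\varepsilon_j}u=0$ to $L_0^{\phi_j}u=0$ and contradicting Lemma~\ref{Lemma-Henry}(iii). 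Hence $u(\pi-x)=-u(x)$ and $u(\pi/2)=0$.

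\textbf{Inductive step.} Assume $u(\pi/2^\ell-x)=-u(x)$ for $\ell=0,\ldots,i-1$, so in particular $u(\pi/2^\ell)=0$ for $\ell\leq i$. The Chafee--Infante lobe structure of $\phi_j$---each lobe on $[k\pi/j,(k+1)\pi/j]$ being a sign-alternating translate of a single midpoint-symmetric profile $\psi$---yields $\phi_j(\pi/2^i-x)=-\phi_j(x)$ on $[0,\pi/2^i]$, via a parity check on $m:=j/2^i=2^{n-i}(2k+1)$ which is even precisely because $i<n$. The same lobe bookkeeping together with the inductive antisymmetries of $u$ gives
\[
\int_0^\pi f(\phi_j)\,u=2^i\int_0^{\pi/2^i} f(\phi_j)\,u,
\]
so that $v:=u|_{[0,\pi/2^i]}\in H^2\cap H^1_0(0,\pi/2^i)$ satisfies
\[
M_i v:=v''+p\,v+2^i\varepsilon_j f(\phi_j)\int_0^{\pi/2^i} f(\phi_j)\,v=0,
\]
an operator of the form \eqref{eq:nl-lineariz} on $(0,\pi/2^i)$. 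The local part $v\mapsto v''+pv$ on $H^2\cap H^1_0(0,\pi/2^i)$ has no zero eigenvalue, for such an eigenfunction would extend by iterated antisymmetric reflections across $\pi/2^{i-1},\ldots,\pi/2$ (using the symmetries of $p$ inherited from $\phi_j$, with $H^2$-smoothness at the reflection nodes guaranteed by $v''$ vanishing at the Dirichlet endpoints) to a nonzero element of $\ker L_0^{\phi_j}$, contradicting Lemma~\ref{Lemma-Henry}(iii). Hence Theorem~\ref{DD} makes the zero eigenvalue of $M_i$ simple. Reapplying the preliminary-step reflection to $\tilde v(x):=v(\pi/2^i-x)$ gives $M_i\tilde v=0$, and so $\tilde v=\pm v$; the $+$ case forces $\int_0^{\pi/2^i} f(\phi_j)v=0$, hence $\int_0^\pi f(\phi_j)u=0$, again contradicting Lemma~\ref{Lemma-Henry}(iii). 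Thus $\tilde v=-v$, i.e., $u(\pi/2^i-x)=-u(x)$, closing the induction.

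\textbf{Main obstacle.} The delicate ingredient is the global reflection identity $\phi_j(\pi/2^i-x)=-\phi_j(x)$ on the entire interval $[0,\pi/2^i]$; local antisymmetry of $\phi_j$ about the single zero $\pi/2^{i+1}$ does not suffice. Its proof requires the translation-congruent lobe structure of $\phi_j$, the within-lobe midpoint symmetry of $\psi$, and the parity of $m=2^{n-i}(2k+1)$, controlled precisely by the hypothesis $i<n$. The same lobe bookkeeping underpins both the factor-of-$2^i$ reduction of the nonlocal integral to the restricted interval and the $H^2$-extension argument excluding a zero eigenvalue of the restricted local Sturm--Liouville operator.
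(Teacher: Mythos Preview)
Your proof is correct and proceeds by the same inductive scheme as the paper, but organizes the inductive step differently. The paper never leaves the full interval: at step $i$ it builds an auxiliary function $u_i$ on all of $[0,\pi]$ (equal to $u(\tfrac{\pi}{2^i}-\cdot)$ on $[0,\tfrac{\pi}{2^i}]$ and to $-u(\cdot-\tfrac{\pi}{2^i})$ on $[\tfrac{\pi}{2^i},\pi]$), checks that $L^{\phi_j}_{\varepsilon_j}u_i=0$, and then invokes the simplicity of the zero eigenvalue of the \emph{full} operator---already available once and for all from Theorem~\ref{DD} and Lemma~\ref{Lemma-Henry}(iii)---to force $u_i=\pm u$. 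You instead restrict to $(0,\tfrac{\pi}{2^i})$ and work with the reduced operator $M_i$; this obliges you to re-establish simplicity for $M_i$ at each step, which in turn costs you the separate reflection-extension argument showing that the local part of $M_i$ has no zero eigenvalue. That extra step is sound (and incidentally $C^1$ matching, automatic from the odd reflection, already suffices for the $H^2$ extension; the vanishing of $v''$ at the endpoints is true but not needed), yet the paper's route sidesteps it entirely. In both arguments the engine is the same parity fact, namely that $j/2^i=2^{n-i}(2k+1)$ stays even precisely while $i<n$, which is what makes $\phi_j$ antisymmetric on $[0,\tfrac{\pi}{2^i}]$ and keeps the induction running.
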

\begin{proof} 
	This result is a consequence of the symmetries of $\phi_j$. For $j$ even, with $j=2^n(2k+1)$, we have, for $1\leq i < n$, \[\phi_j(\tfrac{\pi}{2^i}-x)=-\phi_j(x),\quad  x \in [0,\tfrac{\pi}{2^i}]. \]
	
We know, from Theorem \ref{DD}, that $0$ is a simple eigenvalue of $L_{\varepsilon_j}^{\phi_j}$. Thus, either $u(\pi-x)= u(x)$, for $x \in [0,\pi]$ or $u(\pi-x)=- u(x)$, for $x \in [0,\pi]$. Since $\int_0^\pi f(\phi_j(s))u(s)ds \neq 0$, for $0$ is not an eigenvalue of $L_0^{\phi_j}$, we have that $u(\pi-x)=-u(x)$, $x \in [0,\pi]$ and $u(\frac{\pi}{2})=0$.

Now,\[\begin{split}
\int_0^\pi f(\phi_j(s))u(s)ds&=\int_0^\frac{\pi}{2} f(\phi_j(s))u(s)ds+\int_\frac{\pi}{2}^\pi f(\phi_j(s))u(s)ds\\
&=\int_0^\frac{\pi}{2} f(\phi_j(s))u(s)ds+\int_0^\frac{\pi}{2}f(\phi_j(\pi-x))u(\pi-x)dx\\
&=2\int_0^\frac{\pi}{2} f(\phi_j(s))u(s)ds.
\end{split}\]

Define the auxiliary function
$$
{u}_1(x)=\begin{cases}
u(\frac{\pi}{2}-x),&  x \in [0,\frac{\pi}{2}]\\
-u(x-\frac{\pi}{2}),&  x \in [\frac{\pi}{2},\pi].
\end{cases}
$$
Since $\phi_j$ is antisymmetric in $[0,\frac{\pi}{2}]$, ${u}_1$ is an eigenfunction of $L^{\phi_j}_{\varepsilon_j}$ associated to the zero eigenvalue, we have $u=\pm u_1$. In particular, we have either $u(\frac{\pi}{2}-x)= u(x)$, for $x \in [0,\frac{\pi}{2}]$, or $u(\frac{\pi}{2}-x)=- u(x)$, for $x \in [0,\frac{\pi}{2}]$. Since $\int_0^\frac{\pi}{2} f(\phi_j(s))u(s)ds\neq 0,$ the second alternative occurs. 

By induction, using auxiliary functions 
$$
{u}_i(x)=\begin{cases}
u(\frac{\pi}{2^i}-x),&  x \in [0,\frac{\pi}{2^i}],\\
-u(x-\frac{\pi}{2^i}),&  x \in [\frac{\pi}{2^i},\pi],
\end{cases}
$$
the result follows for $i=1,2,\cdots,n-1$.
\end{proof}

We can use the symmetries provided by Lemma \ref{lemma:sym-u}, in order to conclude that 
$$
\int_0^\pi f(\phi_j(s))u(s)ds={2^n} \int_0^\frac{\pi}{2^n} f(\phi_j(s))u(s)ds.
$$

Defining $\psi_j\!=\!\phi_j|_{[0,\tfrac{\pi}{2^n}]}$ and $v\!=\!u|_{[0,\tfrac{\pi}{2^n}]}$ and noting that $v \!\in\! H^2(0,\frac{\pi}{2^n})\cap H^1_0(0,\frac{\pi}{2^n})$ and satisfies 
\begin{equation}\label{eq:subint}
v''+\frac{\lambda f'(\psi_j)}{a(2^n\|\psi_j'\|^2)}v-2^{n+1}\frac{\lambda^2 a'(2^n\|\psi_j'\|^2)}{a(2^n\|\psi_j'\|^2)^3}f(\psi_j)\int_0^\frac{\pi}{2^n} f(\psi_j(s))v(s)ds=0.
\end{equation} 

Observe that $\psi_j$ is an equilibrium of
\begin{equation}\label{eq_aux4}
\left\{\begin{aligned}
& u_t =d(\|u_x\|_n^2)u_{xx}+\lambda f(u),\ x\in (0,\tfrac{\pi}{2^n}),\ t>0,\\
&u(0,t)=u(\tfrac{\pi}{2^n},t)=0,\ t\geq 0,\\
& u(\cdot,0)=u_0(\cdot) \in H^1_0(0,\tfrac{\pi}{2^n}),
\end{aligned}\right.
\end{equation}
where $d(\cdot)=a(2^n \cdot)$ and $\|u_x\|_n^2=\int_0^\frac{\pi}{2^n} |u_x(s)|^2ds$. 

If $\tilde{\phi}_{2k+1}$ represents the equilibrium of \eqref{eq_aux4} that has $2(k+1)$ zeros in $[0,\tfrac{\pi}{2^n}]$ and is such that $\tilde{\phi}_{2k+1}(x)>0$, $x\in [0,\tfrac{\pi}{k2^n}]$, then we have $\psi_j=\tilde{\phi}_{2k+1}$.

Now, \eqref{eq:subint} implies that $v$ satisfies
$$
L^{\psi_j}_{\varepsilon_j}v=L^{\psi_j}_{0}v-\tfrac{2\lambda^2 d'(\|(\psi_j)'\|_n^2)}{d(\|(\psi_j)'\|_n^2)^3}f(\psi_j)\int_{0}^{\frac{\pi}{2^n}} f(\psi_j(s))v(s)ds=0.
$$
Consequently, $0$ is an eigenvalue of $L^{\psi_j}_{\varepsilon_j}$ and $v$ is a corresponding eigenvector.

Now, we can apply the reasoning of the case $j$ odd to arrive at a contradiction. This completes the proof that $\phi_j$ is hyperbolic for $j$ even.

\section{Conclusion}

In this article we study the problem \eqref{eq_non-local}, where $a$ is globally Lipschitz non-decreasing function and $f$ has the profile of a cubic function of the form $u-u^3$. In that case, the semigroup (solution operator) associated to \eqref{eq_non-local} has a global attractor. Also, \eqref{eq_non-local} has a Lyapunov function, given by \eqref{LyapunovFunction}, and, for $a(0)N^2< \lambda\leq a(0)(N+1)^2$, \eqref{eq_non-local} has $2N+1$ equilibria (Theorem \ref{theo:exist.equilibria}), this and \cite[Theorem 3.8.6]{Hale} implies that the global attractor has the structure seen in \eqref{Att_charac}. This also proves that each equilibrium is topologically hyperbolic in the sense of Definition \ref{TH}, from the results in \cite[Lemma 2.18]{BCLBook}.

We prove that these equilibria are also strictly hyperbolic in the sense of Definition \ref{SH}. Taking advantage of the `change of variables', that transforms the quasilinear problem \eqref{eq_non-local} into the semilinear problem \eqref{eq_nl_changed}, we apply semilinear techniques to ensure strict hyperbolicity for the equilibria of \eqref{eq_non-local}, through a spectral analysis of a nonlocal linear operator. The spectral analysis of this operator is very interesting and challenging. It allows us to deduce that the local stable and unstable manifolds of the equilibria of \eqref{eq_nl_changed} are given in the form expressed in Definition \ref{SH} (see below). Since the change of variables (one for each solution) does not affect the state variable, the same holds for \eqref{eq_non-local}.

\bigskip

For $a(0)N^2< \lambda< a(0)(N+1)^2$, given an equilibrium $\phi$ of \eqref{eq_nl_changed}, consider the operator $L_\varepsilon^\phi$ given by \eqref{Lepsphi}. We have proved that $0$ is not in the spectrum $\sigma(L_\varepsilon^\phi)$ of $L_\varepsilon^\phi$. Let $P_u$ be the spectral projection, associated with the part of $\sigma(L_\varepsilon^\phi)$ to the right of the imaginary axis, and $P_s$ be the spectral projection, associated with the part of $\sigma(L_\varepsilon^\phi)$ to the left of the imaginary axis and define $X_u=P_u(H^1_0(0,\pi))$ and $X_s=P_s(H^1_0(0,\pi))$. Now, \cite[Theorem 4.4]{BCLBook} ensures the existence of $\theta_s$ and $\theta_u$ with the properties required in Definition \ref{SH}. 

Hence, there is a $\delta_0>0$ and constants $K,\beta>0$ in such a way that, for each $0<\delta<\delta_0$ a $0<\delta'<\delta$, such that, if $\|x_u^0\|_{H^1_0(0,\pi)}<\delta'$, there is a global solution $\eta:\R\to H^1_0(0,\pi)$ of \eqref{eq_nl_changed} with $\eta(0)=(x_u^0,\theta_u(x_u^0))$, $\|\eta(\tau)-\phi\|_{H^1_0(0,\pi)}< r$, $\eta(\tau)=\phi+(P_u(\eta(\tau)),\theta_u(P_u(\eta(\tau))))$ for all $\tau\leq 0$ and, for some $\beta>0$,
$$
\|\eta(\tau)-\phi\|_{H^1_0(0,\pi)}\leq K e^{\beta \tau}\|\xi(0) - \phi\|_{H^1_0(0,\pi)}, \quad \forall \tau \leq 0.
$$
also if $\|x_s^0\|_{H^1_0(0,\pi)}<\delta'$, the solution $w:\R^+\to H^1_0(0,\pi)$ of \eqref{eq_nl_changed} such that $w(0)=(x_s^0,\theta_w(x_s^0))$ satisfies $\|w(\tau)-\phi\|_{H^1_0(0,\pi)}< \delta$, for all $\tau\geq 0$, $w(\tau)=\phi+(P_s(w(\tau)),\theta_u(P_s(w(\tau))))$, for all $\tau\geq 0$, and, for some $\beta>0$,
$$
\|w(\tau)-\phi\|_{H^1_0(0,\pi)}\leq K e^{-\beta \tau}\|w(0) - \phi\|_{H^1_0(0,\pi)}, \quad \forall \tau \geq 0.
$$

\bigskip

Changing the time-variable to $t=\int_0^\tau a(\|\eta_x(\cdot,\theta)\|^2)^{-1}d\theta$ and making
$\xi(t)=\eta(\tau)$, in the first case, or to  $t=\int_0^\tau a(\|w_x(\cdot,\theta)\|^2)^{-1}d\theta$ and making $u(t)=w(\tau)$, in the second case, we have also the result for \eqref{eq_non-local}
$$
\|\xi(t)-\phi\|_{H^1_0(0,\pi)}\leq K e^{\beta mt}\|\xi(0) - \phi\|_{H^1_0(0,\pi)}, \quad \forall t \leq 0,
$$
and also
$$
\|u(t)-\phi\|_{H^1_0(0,\pi)}\leq K e^{-\beta mt}\|u(0) - \phi\|_{H^1_0(0,\pi)}, \quad \forall t \geq 0.
$$

We emphasize that the strict hyperbolicity for quasilinear problems still lacks, in general, a method of proof. In the example treated in this paper, that can be accomplished through its relation with a semilinear problem.

\bibliographystyle{amsplain}

\end{document}